\DeclareMathOperator{\Aut}{Aut}
\DeclareMathOperator{\Core}{Core}
\DeclareMathOperator{\Sol}{Sol}
\DeclareMathOperator{\Syl}{Syl}
\DeclareMathOperator{\PGL}{PGL}
\DeclareMathOperator{\PSL}{PSL}
\DeclareMathOperator{\SL}{SL}
\DeclareMathOperator{\Fit}{Fit}
\newcommand{\cyc}[1]{\langle #1\rangle}
\newcommand{\C}{\mathcal {C}}
\newcommand{\N}{\mathcal{N}}
\newcommand{\sub}{\leqslant}
\newcommand{\nsub}{\nleqslant}
\newcommand{\nor}{\trianglelefteq}
\newcommand{\bk}{\backslash}
\newtheorem{theorem}{Theorem}[section]
\newtheorem{lemma}[theorem]{Lemma}
\newtheorem{corollary}[theorem]{Corollary}
\newtheorem*{problem}{Problem}
\newtheorem{remark}[theorem]{Remark}
\newtheorem{conjecture}{Conjecture}
\begin{document}

\title[\scriptsize{ The impact of the solubilizer of an element...}]{The impact of the solubilizer of an element on the structure of a finite group}

\author[ Hamid Mousavi , Mina Poozesh and Yousef Zamani]{Hamid Mousavi, Mina Poozesh and Yousef Zamani$^{\ast }$}

\address{Department of Pure Mathematics, Faculty of Mathematical Sciences, University of Tabriz, Tabriz, Iran}
\email{hmousavi@tabrizu.ac.ir}

\address{Department of Mathematics, Faculty of Basic Sciences, Sahand University of Technology, Tabriz, Iran}
\email{mi\_poozesh@sut.ac.ir}

\address{Department of Mathematics, Faculty of Basic Sciences, Sahand University of Technology, Tabriz, Iran}
\email{zamani@sut.ac.ir}
\thanks{$^{\ast}$ Corresponding author}
\subjclass[2020]{20D05, 20D99}
\keywords{Finite group, insoluble group, solubilizer}

\maketitle

\begin{abstract}
Let $G$ be a finite group, and let $x$ be an element of $G$.
Denote by $\Sol_G(x)$ the set of all $y \in G$ such that the group generated by $x$
and $y$ is soluble. We investigate the influence of $\Sol_G(x)$ on the structure of $G$.
\end{abstract}
\section{\bf Introduction}
Some of the properties of a finite group $G$ are determined by the properties
of its $2$-generated subgroups. For instance, according to Zorn's theorem in \cite{Zo},
a finite group $G$ is nilpotent if and only if every two-generated subgroup of $G$ is nilpotent.
In \cite{Ba2}, Baer proved that $G$ is supersoluble if and only if every two-generated subgroup of $G$ is supersoluble.
A similar result for the solubility property is provided by John G. Thompson in \cite[Corollary 2]{Thompson-1}
which is an outstanding criterion for the solubility of finite groups. The result reads as follows:

{\em A finite group $G$ is soluble if and only if every two-generated subgroup of $G$ is soluble.}

Given a group $G$, we say that $g\in G$ is a \emph{radical element} if for every $x\in G$, the subgroup
generated by $x$ and $g$ is soluble. For the soluble radical of $G$, which is denoted by $ R(G)$,
the following extension of Thompson's theorem has been proved in \cite{Guralnick}:

{\em The soluble radical $R(G)$ of $G$ coincides with the collection of all radical elements in $G$.}

An interesting topic is the characterization of finite groups in terms of soluble
two-generated subgroups. To achieve this goal, for an element $x\in G$ we define the {\em solubilizer} of $x$ in $G$ by
$$
\Sol_G (x):= \{ g \in G ~|~ \langle x, g \rangle \ \mbox{is soluble} \}.
$$
It might be just a subset of $G$ and not a subgroup.
Note that $\Sol_G (x) = G$ if and only if $x$ is a radical element in $G$ and equivalently $x\in R(G)$.

In \cite[Theorem 3.1]{Akbari}, it is proved that, if for some $x\in G$, the elements of $\Sol_G (x)$ commute pairwise, then $G$ is abelian. In \cite[Lemma 3.1]{Akbari1}, extending this result it is proved that if for every $u_1, \dots, u_k \in \Sol_G(x)$ we have the left-normed commutator $[u_1,\dots, u_k] = 1$, then $\Sol_G(x)$ is a subgroup. Moreover it is proved that $\Sol_G(x)$ is nilpotent of class at most $k-1$, and that $G$ is nilpotent
of nilpotency class at most $2$ if and only if $k=3$. Also, it is raised the question whether we always have the $k$-th term of the lower central series of G, $\gamma_k(G) = 1$ in the case that for every $u_1, \dots, u_k\in\Sol_G(x)$ it is $[u_1, \dots, u_k] = 1$. 
 
 Our answer to this question is no. For a counter example assume that $G\cong\PGL(2,7)$, then by using GAP~\cite{GAP}, we see that for all $x\in G$ of order $8$, $\Sol_G(x)\cong D_{16}$ is a Sylow $2$-subgroup of  $G$.
 
It is clear that if $\Sol_G(x)$ is a $2$-subgroup of $G$ for some $x \in G$, it must be a Sylow $2$-subgroup of $G$. Therefore, its structure is expected to be a crucial factor in determining the structure of the insoluble group $G$. The question now arises: in which insoluble groups $G$ is $\Sol_G(x)$ a subgroup?

In this article, we answer this question in the specific case where $\Sol_G(x)$ is a maximal and meta-cyclic $2$-subgroup of $G$ for some $x$ in $G$. Additionally, we show that if $|\Sol_G(x)|=2p$ for an odd prime number $p$, then $G$ is a simple group.
Some other results are also given.

\section{\bf  Preliminaries and some properties of solubilizers}
 Now we state some elementary results about the solubilizer of an element in a finite group.
 \begin{theorem}[Deskins, Janko, Huppert]\cite[Satz 4.7.4]{Huppert}\label{Deskins}
 Assume that $G$ has a nilpotent maximal subgroup $M$, such that the nilpotency class of  a Sylow $2$-subgroup of $M$ is at most $2$. Then $G$ is soluble.
\end{theorem}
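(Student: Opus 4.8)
The plan is to argue by a minimal counterexample: suppose $G$ has least order among the insoluble groups satisfying the hypotheses, with $M$ the nilpotent maximal subgroup and $S\in\Syl_2(M)$ of class at most $2$. First I would reduce to the core-free case. If $\Core_G(M)\neq 1$, then $\Core_G(M)\sub M$ is nilpotent, hence soluble, while $G/\Core_G(M)$ again satisfies the hypotheses (the image of $M$ is nilpotent maximal, and its Sylow $2$-subgroup is a quotient of $S$, so still of class at most $2$), whence it is soluble by minimality; a soluble-by-soluble extension is soluble, a contradiction. So $\Core_G(M)=1$, and maximality then forces $N_G(M)=M$. Next, if $N$ is an abelian minimal normal subgroup, then $N\nsub M$, so $MN=G$, and since $N$ centralizes $N\cap M\nor M$ we get $N\cap M\nor MN=G$, hence $N\cap M\sub\Core_G(M)=1$; thus $G=N\rtimes M$ is soluble, a contradiction. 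Therefore every minimal normal subgroup of $G$ is nonabelian, $F(G)=1$, $F^*(G)=E(G)$ is a direct product of nonabelian simple groups with $C_G(F^*(G))=1$, and in particular $|G|$ is even.

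The second ingredient is a clean dichotomy for the Sylow $2$-subgroup of $M$. Since $M$ is nilpotent, $S$ is characteristic in $M$, so $M\sub N_G(S)$. I claim that either $S=1$ (so $M$ has odd order) or $S\in\Syl_2(G)$. Indeed, if $S\notin\Syl_2(G)$, choose $T\in\Syl_2(G)$ with $S<T$; then $S<N_T(S)$, and $N_T(S)\cap M$ is a $2$-subgroup of $M$ containing $S$, hence equals $S$, so $N_T(S)\nsub M$. Consequently $N_G(S)>M$, maximality gives $N_G(S)=G$, so $S\nor G$ and $S\sub\Core_G(M)=1$. This proves the dichotomy.

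In the odd case $S=1$, the subgroup $M$ is a nilpotent maximal subgroup of odd order, and here I would invoke Thompson's theorem that a finite group with a nilpotent maximal subgroup of odd order is soluble (itself resting on his normal $p$-complement theorem and, ultimately, the odd-order theorem), contradicting insolubility. So we are reduced to $S\in\Syl_2(G)$: the whole Sylow $2$-subgroup of $G$ has class at most $2$, and $N_G(S)=M=S\times D$, where $D$ is the nilpotent Hall $2'$-subgroup of $M$. Since $D$ centralizes $S$ one computes $C_G(S)=C_M(S)=Z(S)\times D$, whence $N_G(S)/C_G(S)\cong S/Z(S)$, a $2$-group because $S$ has class at most $2$. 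The goal is now to run a transfer argument: controlling the $G$-fusion of elements of $S$ by $N_G(S)=M$ via the class-$\le 2$ structure (the substitute for Burnside's abelian hypothesis) yields a normal $2$-complement $K\nor G$. Then $K$ has odd order and is soluble by the Feit--Thompson theorem, while $G/K\cong S$ is a $2$-group, so $G$ is soluble --- the final contradiction.

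The hard part is precisely this last step. When $S$ is nonabelian of class exactly $2$, Burnside's normal $p$-complement theorem does not apply, and knowing that $N_G(S)/C_G(S)$ is a $2$-group controls fusion only at the top; one must push the transfer/focal-subgroup analysis through using only the class-$\le 2$ hypothesis on $S$ together with the nilpotency and maximality of $M$, simultaneously excluding the nonabelian simple composition factors sitting inside $F^*(G)$ (note that Sylow $2$-subgroups of class $\le 2$ do occur in simple groups, so no purely local obstruction suffices). This is the technical core --- essentially Janko's contribution extending Thompson's abelian case --- and it is also where the reliance on the odd-order theorem is unavoidable.
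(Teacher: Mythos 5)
The paper offers no proof of this statement at all: it is quoted from Huppert (Satz IV.7.4) as a known theorem of Deskins, Thompson and Janko, so your proposal must be judged on its own. Your preliminary reductions are correct and cleanly executed: the passage to a minimal counterexample with $\Core_G(M)=1$; the elimination of abelian minimal normal subgroups via $N\cap M\nor\cyc{M,N}=G$; the dichotomy ``$S=1$ or $S\in\Syl_2(G)$'', which follows because $S$ is characteristic in the nilpotent group $M$ and normalizers grow in $2$-groups; and the disposal of the odd-order case by Thompson's theorem on nilpotent maximal subgroups of odd order. The computation $N_G(S)=M=S\times D$, $C_G(S)=Z(S)\times D$, $N_G(S)/C_G(S)\cong S/Z(S)$ is also right.

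The genuine gap is that the argument stops exactly where the theorem begins. The entire content of the Deskins--Janko contribution is the claim that $S\in\Syl_2(G)$ of class at most $2$, together with $N_G(S)=M$ nilpotent and maximal, forces a normal $2$-complement; you announce this as ``the goal'' and label it ``the technical core,'' but supply no argument. The information you do extract --- that $N_G(S)=S\,C_G(S)$, equivalently that $N_G(S)$ induces only inner automorphisms on $S$ --- is far from sufficient for $p=2$: Burnside's theorem needs $S$ abelian, Frobenius's criterion needs fusion control in \emph{all} local subgroups $N_G(T)$ for $T\sub S$, and the Thompson/Glauberman-type normal $p$-complement theorems that replace ``abelian'' by ``small class'' are restricted to odd primes. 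Bridging this is precisely the nontrivial transfer and fusion analysis (using the maximality of $M$ to pin down the local subgroups above $S$) that constitutes the published proof. As written, your text is a correct reduction of the theorem to its hardest case plus an accurate description of what remains to be done, not a proof. (A small remark: once the complement $K$ exists the contradiction is immediate in your setup, since you have already shown $G$ has no nontrivial soluble normal subgroup and $K$ is soluble by Feit--Thompson.)
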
   
 \begin{remark}\label{R1}
 Assume that  $G$ is an  insoluble group and $\Sol_G(x)$ is a subgroup of $G$. Then, by the above theorem, for any $x\in G$,  a Sylow $2$-subgroup of $\Sol_G(x)$ is non-abelian of order at least $16$, otherwise, if $M\sub G$ such that $\Sol_G(x)$ is a maximal in $M$, then $M$ must be soluble, a contradiction, because $\Sol_G(x)$ is the maximal soluble subgroup of $G$ containing $x$.
  \end{remark}
 
\begin{lemma}\label{proper-sol}\cite{Akbari, Hai}
Let $G$ be a finite group and $N\nor G$. Then, the following statements hold for any $x\in G$.
\begin{itemize}
\item[(1)]
$
\langle x\rangle \subseteq \mathcal{N}_{G}(\langle x\rangle)\cup R(G) \subseteq \Sol_G (x)=\bigcup_{H}H,\\
$
where the union ranges over all soluble subgroups $H$ of $G$ containing $x$.
\item[(2)] $|x|$ divides $|\Sol_G (x)|$.
\item[(3)] $\frac{\Sol_G (x)N}{N} \subseteq \Sol_{G/N} (xN) ~($obviously, here  $\frac{\Sol_G (x)N}{N}:=\{ yN~\big{|}~y\in \Sol_G(x)\})$.
\item[(4)] If $N$ is soluble, then $\Sol_{\frac{G}{N}} (xN)=\frac{\Sol_G (x)}{N}$.
\item[(5)] If $N$ is soluble, then $|\Sol_G (x)|$ is divisible by $|N|$. In particular, $|\Sol_G (x)|$ is divisible by $|R(G)|$. Furthermore,
 $|\frac{\Sol_G (x)}{N}|=\frac{|\Sol_G (x)|}{|N|}$.
 \item[(6)] If $G$ is insoluble, then $\langle x\rangle $ is properly contained in $\Sol_G (x)$.
 \item[(7)] $|\Sol_G (x)|$ cannot be equal to a prime.
 \item[(8)] If $\langle x\rangle =\langle y\rangle $ for some $y\in G$, then $\Sol_G (x)=\Sol_G (y)$.
 \item[(9)] For any $g\in G$, $\Sol_G (x^g)=\Sol_G (x)^g$.
\item[(10)] For every $x\in G$, $|\mathcal{C}_{G}(x)|$ divides $|\Sol_G(x)|$.
\item[(11)] A finite group $G$ is soluble if and only if $\Sol_G (x)$ is a subgroup of $G$ for all $x\in G$.
  \end{itemize}
 \end{lemma}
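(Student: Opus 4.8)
The backbone of the whole lemma is part (1), which I would prove first and use throughout. The inclusion $\cyc{x}\sub\N_G(\cyc{x})$ is trivial; for $\N_G(\cyc{x})\subseteq\Sol_G(x)$ note that if $g$ normalises $\cyc{x}$ then $\cyc{x}\nor\cyc{x,g}$ with cyclic quotient, so $\cyc{x,g}$ is metabelian, hence soluble; for $R(G)\subseteq\Sol_G(x)$ observe that $\cyc{x,g}R(G)/R(G)$ is cyclic and $\cyc{x,g}\cap R(G)$ is soluble, so $\cyc{x,g}$ is soluble-by-soluble. The identity $\Sol_G(x)=\bigcup_H H$ (union over soluble $H\ni x$) is immediate both ways: $g\in\Sol_G(x)$ puts $g$ in the soluble group $\cyc{x,g}\ni x$, and any soluble $H\ni x,g$ forces $\cyc{x,g}$ soluble. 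Parts (8) and (9) are then formal: $\cyc{x}=\cyc{y}$ gives $\cyc{x,g}=\cyc{y,g}$ for every $g$, while $\cyc{x^g,h}=\cyc{x,h^{g^{-1}}}^g$ transports solubility, yielding $\Sol_G(x^g)=\Sol_G(x)^g$.

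For the elementary divisibility and quotient statements I would argue as follows. For (2): left multiplication by $\cyc{x}$ preserves $\Sol_G(x)$ since $\cyc{x,xg}=\cyc{x,g}$, and this action is free, so $\Sol_G(x)$ is a disjoint union of left cosets $\cyc{x}g$ of size $|x|$, giving $|x|\mid|\Sol_G(x)|$. Part (3) is the observation that the image of the soluble group $\cyc{x,y}$ under $G\to G/N$ is $\cyc{xN,yN}$, hence soluble. For (4) and (5) the key point is that a soluble normal $N$ lies in $R(G)\subseteq\Sol_G(x)$, and, crucially, that $N$ is \emph{normal}: for $g\in\Sol_G(x)$ and $n\in N$ one has $\cyc{x,gn}\sub\cyc{x,g}N$, which is soluble-by-soluble, so $gn\in\Sol_G(x)$. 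Thus $\Sol_G(x)$ is a union of cosets of $N$, giving $|N|\mid|\Sol_G(x)|$, the equality $|\Sol_G(x)/N|=|\Sol_G(x)|/|N|$, and, together with (3), the reverse inclusion for $\Sol_{G/N}(xN)=\Sol_G(x)/N$. The forward direction of (11) is immediate: a soluble $G$ has $R(G)=G$, so $x$ is radical and $\Sol_G(x)=G$.

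The structural parts (6), (7) and the converse of (11) rest on the criteria recalled above. For (6) I would take a minimal insoluble counterexample $G$ with $\Sol_G(x)=\cyc{x}$ and a maximal subgroup $M\supseteq\cyc{x}$; since $\Sol_M(x)=M\cap\Sol_G(x)=\cyc{x}$, minimality forces $M$ soluble, whence $M\subseteq\Sol_G(x)=\cyc{x}$ and $\cyc{x}$ is maximal. But $\cyc{x}$ is nilpotent with cyclic (hence abelian, of nilpotency class at most $2$) Sylow $2$-subgroup, so Theorem~\ref{Deskins} makes $G$ soluble, a contradiction. For (7), parts (2) and (1) show that $|\Sol_G(x)|=p$ prime forces $|x|\in\{1,p\}$ and $\cyc{x}=\Sol_G(x)$; by (6) this is impossible for insoluble $G$, so $G$ is soluble with $\Sol_G(x)=G$, and a prime value can occur only in the degenerate case $G\cong\mathbb{Z}_p$. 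For the converse of (11) I would argue contrapositively: if $G$ is insoluble then, by Remark~\ref{R1}, any $\Sol_G(x)$ that happened to be a subgroup would have a non-abelian Sylow $2$-subgroup of order at least $16$; since this is violated in minimal simple groups, whose Sylow $2$-subgroups are abelian or dihedral, some $\Sol_G(x)$ fails to be a subgroup.

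The main obstacle is part (10). The containment $\C_G(x)\subseteq\Sol_G(x)$ is easy, as each $g\in\C_G(x)$ makes $\cyc{x,g}$ abelian, hence soluble. The difficulty is that the coset argument used for (2) and (5) is unavailable here: those worked because $\cyc{x}$ acts by translation and $N$ was normal, whereas $\C_G(x)$ is in general neither normal nor soluble, and indeed $\Sol_G(x)$ need not be a union of cosets of $\C_G(x)$ — for instance $\Sol_{A_5}((12)(34))$ has $36$ elements but is not a union of cosets of its Klein four centraliser. Hence the divisibility is a genuinely global statement. My plan is to exploit the two commuting actions on $\Sol_G(x)$ that do survive, namely the free left-translation by $\cyc{x}$ and the conjugation action of $\C_G(x)$ (which preserves $\Sol_G(x)$ because $\cyc{x^c,g^c}=\cyc{x,g}^c$ for $c\in\C_G(x)$), and to reduce to the case $R(G)=1$ using (4) and (5); in that case the remaining input is the detailed analysis of $\C_G(x)$ inside the almost-simple sections of $G$ as carried out in \cite{Akbari, Hai}. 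Assembling the orbit data into divisibility by the full $|\C_G(x)|$ is where I expect the real effort to lie.
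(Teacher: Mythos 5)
The paper does not actually prove this lemma; it is imported wholesale from \cite{Akbari, Hai}, so there is no internal argument to measure yours against. On its own terms, your treatment of parts (1)--(9) is correct and standard: the free left-translation action of $\cyc{x}$ for (2), the observation that normality of a soluble $N$ makes $\Sol_G(x)$ a union of cosets of $N$ for (4)--(5), Theorem~\ref{Deskins} applied to a cyclic maximal subgroup in a minimal counterexample for (6), and the formal identities for (8)--(9) all work. Your remark that (7) literally fails for $G\cong C_p$ is also accurate; the statement tacitly excludes that degenerate case. Two genuine gaps remain, however.

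First, part (10) is not proved. Your diagnosis is right --- $\Sol_{A_5}((1\,2)(3\,4))$ has $36$ elements and is not a union of cosets of its Klein four centraliser, so no translation argument can succeed --- but ``reduce to $R(G)=1$ and analyse $\C_G(x)$ inside the almost-simple sections'' is a programme, not an argument, and (10) is exactly the item of the lemma that carries real content. As written, the divisibility $|\C_G(x)|\mid|\Sol_G(x)|$ is simply asserted.

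Second, your proof of the converse of (11) rests on the claim that minimal simple groups have abelian or dihedral Sylow $2$-subgroups. This is false: $\PSL(3,3)$ has a semidihedral Sylow $2$-subgroup of order $16$, and the Suzuki groups $\mathrm{Sz}(2^p)$ have non-abelian Sylow $2$-subgroups of order $2^{2p}\geq 64$; in neither case does the conclusion of Remark~\ref{R1} (a non-abelian Sylow $2$-subgroup of order at least $16$ in $\Sol_G(x)$) produce any contradiction. You also never explain how an arbitrary insoluble $G$ is reduced to a minimal simple group while retaining control of $\Sol_G(x)$. A repair using only tools the paper already quotes: assume every $\Sol_G(y)$ is a subgroup and induct on $|G|$. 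If $|G|$ is odd, apply the odd-order theorem. Otherwise pick an involution $x$; by the paper's lemma on involutions, $x\in C:=\Core_G(\Sol_G(x))$, so $\Sol_C(x)=C$, hence $x\in R(C)$ by the Guralnick--Kunyavski--Plotkin--Shalev characterisation of the soluble radical, and $R(C)$ is a nontrivial soluble normal subgroup of $G$. Pass to $G/R(C)$ using part (4) and conclude by induction.
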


Similar to Lemma~\ref{proper-sol}-(i), $\N_G(H)\subseteq  \Sol_G (x)$  for any soluble subgroup $H$ of $G$ when  $x\in H$, because $\cyc{x,y}\sub\cyc{H,y}$ is soluble for any $y\in\N_G(H)$.

\begin{lemma}\label{product}
Let $H$ and $K$ be two subgroups of $G$ such that $G=HK$ and $[H,K]=1$. Assume that $x\in H$, then $\Sol_G(x)=K\Sol_H(x)$. 
\end{lemma}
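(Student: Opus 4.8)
The plan is to establish the two inclusions $K\Sol_H(x)\subseteq \Sol_G(x)$ and $\Sol_G(x)\subseteq K\Sol_H(x)$ separately, using only that solubility passes to subgroups and quotients and is closed under extensions. The first fact I would record is that $[H,K]=1$ forces $H,K\nor G$ and, more importantly, $H\cap K\subseteq Z(G)$: any $z\in H\cap K$ centralizes $K$ (as $z\in H$) and centralizes $H$ (as $z\in K$), hence centralizes $G=HK$. In particular $H\cap K$ is abelian, a fact that will carry the reverse inclusion.

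For $K\Sol_H(x)\subseteq\Sol_G(x)$, I would take an element $kh$ with $k\in K$ and $h\in\Sol_H(x)$, so that $\langle x,h\rangle$ is soluble. Since $k$ commutes with both $x\in H$ and $h\in H$, the cyclic group $\langle k\rangle$ is central in $M:=\langle x,h,k\rangle=\langle x,h\rangle\langle k\rangle$; thus $M/\langle k\rangle$ is a homomorphic image of the soluble group $\langle x,h\rangle$ while $\langle k\rangle$ is abelian, so $M$ is soluble. As $\langle x,kh\rangle\subseteq M$, it is soluble and $kh\in\Sol_G(x)$.

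For the reverse inclusion I would take $g\in\Sol_G(x)$ and write $g=hk$ with $h\in H$ and $k\in K$ (possible since $G=HK$); because $H$ and $K$ commute this equals $kh$, so it suffices to show $h\in\Sol_H(x)$. Passing to $G/K$ (where $K\nor G$), the image of the soluble group $\langle x,g\rangle$ is soluble, and since $gK=hK$ this image equals $\langle x,h\rangle K/K\cong\langle x,h\rangle/(\langle x,h\rangle\cap K)$. Hence $\langle x,h\rangle/(\langle x,h\rangle\cap K)$ is soluble, and the normal subgroup $\langle x,h\rangle\cap K\subseteq H\cap K$ is abelian by the opening remark; therefore $\langle x,h\rangle$ is soluble as an extension of a soluble group by a soluble group, giving $h\in\Sol_H(x)$ and $g=kh\in K\Sol_H(x)$.

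I do not expect a serious obstacle, but the point requiring care is that $K$ is \emph{not} assumed soluble, so the quotient identity of Lemma~\ref{proper-sol}(4) is unavailable; the reverse inclusion must instead be driven by the centrality, and hence solubility, of $H\cap K$, which is exactly what lets me recover solubility of $\langle x,h\rangle$ from that of its image in $G/K$. A secondary point to keep in mind is that the decomposition $g=hk$ need not be unique when $H\cap K\neq 1$, but any choice suffices, since distinct decompositions differ only by a central element of $H\cap K$.
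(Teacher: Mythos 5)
Your proof is correct, and your reverse inclusion takes a genuinely different (though cousin) route from the paper's. The paper argues locally and ``upward'': given $g=hk\in\Sol_G(x)$, it notes that $k$ centralizes $\cyc{x,hk}$, so $\cyc{x,hk,k}=\cyc{x,hk}\cyc{k}$ is soluble (soluble times a central cyclic subgroup) and contains $h=(hk)k^{-1}$, hence contains the soluble group $\cyc{x,h}$; this needs neither the normality of $K$ in $G$ nor any statement about $H\cap K$. You instead argue globally and ``downward'': you observe $K\nor G$ and $H\cap K\sub Z(G)$, pass to $G/K$, identify the image of $\cyc{x,g}$ with $\cyc{x,h}/(\cyc{x,h}\cap K)$, and recover solubility of $\cyc{x,h}$ because the kernel $\cyc{x,h}\cap K\sub H\cap K$ is abelian. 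Your version is slightly longer but makes explicit exactly why Lemma~\ref{proper-sol}(4) is unavailable ($K$ need not be soluble) and what replaces it (centrality of $H\cap K$), and your remark that the decomposition $g=hk$ is only determined up to a central element of $H\cap K$ is a worthwhile precision that the paper leaves implicit. Your forward inclusion is essentially the paper's (which it dismisses as obvious), spelled out via the central extension $M=\cyc{x,h}\cyc{k}$.
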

\begin{proof}
Obviously $K\Sol_H(x)\subseteq\Sol_G(x)$. Assume that $g\in\Sol_G(x)$, then $g=hk$ for some $h\in H$ and $k\in K$ and $\cyc{x,hk}$ is soluble. Since $\cyc{x,hk}\sub\C_G(k)$, $\cyc{x,h}\sub\cyc{x,hk,k}$ is soluble. Therefore $h\in\Sol_H(x)$ and $g\in K\Sol_H(x)$. The assertion is obtained.
\end{proof}

\section{$\Sol$ as a subgroup of $G$}
While $\Sol_G(x)$ may not always be a subgroup for a finite group $G$, it is important to note that for an insoluble group $G$, where $G = \cyc{x^G}$ for some involution $x$ of $G$, $\Sol_G(x)$ is definitely not a subgroup of $G$.

 \begin{lemma}\label{2-power}
Let $G$ be a finite  group and $x$ be an arbitrary involution of $G$. If $\Sol_G(x)$ is a subgroup of $G$, then $x^G\subset\Core_G(\Sol_{G}(x))$. If  in additional $G=\cyc{x^G}$, then $G$ is soluble.
\end{lemma}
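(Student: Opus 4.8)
The plan is to exploit the classical fact that a group generated by two involutions is dihedral, and hence soluble. Since $x$ is an involution, every conjugate $x^g$ is again an involution, so for each $g\in G$ the subgroup $\cyc{x,x^g}$ is generated by two involutions and is therefore dihedral; being metabelian it is soluble, and consequently $x^g\in\Sol_G(x)$. This already yields $x^G\subseteq\Sol_G(x)$.

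For the first assertion I would then pass to the normal subgroup generated by the whole class. Because $\Sol_G(x)$ is assumed to be a subgroup and it contains the set $x^G$, it contains the subgroup $\cyc{x^G}$. On the other hand $x^G$ is a single conjugacy class, hence invariant under conjugation, so the subgroup $\cyc{x^G}$ it generates is normal in $G$. A normal subgroup of $G$ lying inside $\Sol_G(x)$ must be contained in the largest such subgroup, namely $\Core_G(\Sol_G(x))$; therefore $x^G\subseteq\cyc{x^G}\subseteq\Core_G(\Sol_G(x))$, the containment being proper since the identity lies in the core but not in $x^G$.

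For the second assertion, suppose in addition that $G=\cyc{x^G}$. Combining this with the inclusion just established gives $G=\cyc{x^G}\subseteq\Core_G(\Sol_G(x))\subseteq\Sol_G(x)\subseteq G$, so $\Sol_G(x)=G$. By the characterisation recalled in the introduction, $\Sol_G(x)=G$ is equivalent to $x$ being a radical element, that is $x\in R(G)$. Since $R(G)$ is normal in $G$, the entire class satisfies $x^G\subseteq R(G)$, whence $G=\cyc{x^G}\subseteq R(G)$ and so $G=R(G)$ is soluble.

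The only genuinely nontrivial ingredient is the opening observation that two involutions generate a dihedral (hence soluble) group; everything afterwards is formal manipulation with the core together with the stated properties of the solubilizer in Lemma~\ref{proper-sol} and the equivalence $\Sol_G(x)=G\iff x\in R(G)$. I do not anticipate a serious obstacle, but one should be careful to invoke the hypothesis that $\Sol_G(x)$ is a subgroup exactly where it is used: it is precisely what allows the normal subgroup $\cyc{x^G}$ to be captured inside $\Sol_G(x)$, and therefore inside its core.
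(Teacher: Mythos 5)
Your proof is correct and rests on the same key observation as the paper's: two involutions generate a dihedral, hence soluble, group, so $x^G\subseteq\Sol_G(x)$. The only (cosmetic) difference is the final bookkeeping step --- you pass through the normal closure $\cyc{x^G}\nor G$ sitting inside the subgroup $\Sol_G(x)$, while the paper instead notes $x\in\Sol_G(x^g)$ for all $g$ and uses $\Core_G(\Sol_G(x))=\bigcap_{g\in G}\Sol_G(x)^g=\bigcap_{g\in G}\Sol_G(x^g)$; both are formal consequences of the same idea, and your treatment of the second assertion matches the paper's exactly.
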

\begin{proof}
Since for any $g\in G$, the group $\cyc{x, x^g}$ is either cyclic or dihedral, i.e. in particular soluble, we have $x\in \Sol_G(x^g)$.  As
$$
\Core_G(\Sol_G(x))=\bigcap_{g\in G}\Sol_G(x)^g=\bigcap_{g\in G}\Sol_G(x^g),
$$
 we conclude that $x\in \Core_G(\Sol_G(x))$. Therefore $x^G\subset\Core_G(\Sol_{G}(x))$.

Now assume that $G=\cyc{x^G}$. Then $G=\Sol_G(x)$, thus $x\in R(G)$ and so $G=\cyc{x^G}=R(G)$ is soluble.
\end{proof}
By the above lemma, if $G$ is a simple group and $x \in G$ is an involution, $\Sol_G(x)$ is not a subgroup of $G$. The same holds if $G$ is a symmetric group of degree at least 5.

\begin{lemma}\label{lem self}
Finite insoluble groups do not have self-normalizing subgroups of prime order.  	
\end{lemma}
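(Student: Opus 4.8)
The plan is to turn the hypothesis into a strong structural constraint and then invoke two classical theorems. Suppose, for contradiction, that the insoluble group $G$ has a self-normalizing subgroup $P=\cyc{x}$ of prime order $p$, so that $\N_G(P)=P$. I would first extract the elementary consequences: since $\C_G(P)\sub\N_G(P)=P$ and $P$ is abelian, we get $\C_G(x)=\C_G(P)=P$, and in particular $\N_G(P)=\C_G(P)$.

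Next I would show that $P$ is actually a Sylow $p$-subgroup of $G$. If it were not, a Sylow $p$-subgroup $S>P$ would be a $p$-group in which the normalizer of the proper subgroup $P$ strictly grows, giving $P<\N_S(P)\sub\N_G(P)=P$, a contradiction. Hence $|G|_p=p$. Now $P$ is an abelian Sylow $p$-subgroup with $\N_G(P)=\C_G(P)$, so Burnside's normal $p$-complement theorem provides $N\nor G$ with $G=N\rtimes P$ and $p\nmid|N|$. The key observation is that conjugation by $x$ acts on $N$ without nontrivial fixed points: $\C_N(x)=\C_G(x)\cap N=P\cap N=1$, since $\gcd(p,|N|)=1$. (Should this automorphism be trivial, then $N\sub\C_G(x)=P$ forces $N=1$ and $G=P$ is soluble, already a contradiction.)

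It remains to conclude. Since $x$ induces a fixed-point-free automorphism of prime order $p$ on $N$, Thompson's theorem on fixed-point-free automorphisms of prime order shows that $N$ is nilpotent, hence soluble; as $G/N\cong P$ is cyclic, $G$ itself is soluble, contradicting insolubility. The deepest and indispensable input is Thompson's fixed-point-free automorphism theorem (for $p=2$ one may instead apply the Feit--Thompson odd order theorem to the odd-order complement $N$); the remaining steps---self-normalization forcing $P$ to be Sylow, and Burnside's transfer argument---are elementary. I expect the only genuine subtlety to be the careful verification that the conjugation automorphism is both nontrivial and regular, which is precisely where the coprimality $\gcd(p,|N|)=1$ does the work.
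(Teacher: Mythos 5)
Your proof is correct and follows essentially the same route as the paper's: self-normalization forces $\cyc{x}$ to be a Sylow $p$-subgroup with $\N_G(P)=\C_G(P)$, Burnside gives a normal $p$-complement $N$, and the fixed-point-free action of $x$ on $N$ (the paper phrases this as $NP$ being a Frobenius group with kernel $N$) yields nilpotency of $N$ via Thompson, hence solubility of $G$, a contradiction. Your write-up is in fact slightly more explicit than the paper's about why the Frobenius/fixed-point-free configuration is contradictory.
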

\begin{proof}
Let $G$ be a finite insoluble group and $\langle x \rangle$ be  a self-normalizing subgroup of order $p$, where $p$ is a prime number. Then $\mathcal{C}_G(x)=\langle x \rangle$, hence the {\rm Sylow} $p$-subgroup of $G$ is of order $p$. Suppose $P$ is a Sylow $p$-subgroup of $G$.
Then $\mathcal{N}_G(P)=\mathcal{C}_G(P)$ and  so $P$ has a normal complement in $G$, say $N$. Since $P=\mathcal{C}_G(P)$, so $G=NP$ is a Frobenius group with kernel $N$, which is a contradiction.
\end{proof}

\begin{lemma}\label{L.1}
Let $G$ be a finite group and $x\in G$. Then either $\N_G(\cyc{x})=\Sol_G(x)$ or $|\Sol_G(x)|> \ell|x|$, where $\ell=\min \{|\cyc{x}: \cyc{x}\cap\cyc{x^y}|\,|\, y\not\in\N_G(\cyc{x})\}$.
\end{lemma}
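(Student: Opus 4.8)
The plan is to build on the inclusion $\N_G(\cyc{x})\sub\Sol_G(x)$ recorded in Lemma~\ref{proper-sol}(1), so that the two alternatives form a genuine dichotomy: if this inclusion happens to be an equality we are in the first case and there is nothing to prove. Hence I assume it is strict and fix an element $g\in\Sol_G(x)\setminus\N_G(\cyc{x})$. The aim is then to exhibit enough elements of $\Sol_G(x)$ to force $|\Sol_G(x)|>\ell|x|$. Along the way one should note the degenerate case $\N_G(\cyc{x})=G$: there is then no admissible $y$ and $\ell$ is vacuous, but $\Sol_G(x)=G=\N_G(\cyc{x})$, so the first alternative holds automatically and no generality is lost.

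First I would record that $H:=\cyc{x,g}$ is soluble, since $g\in\Sol_G(x)$, and therefore $H\sub\Sol_G(x)$ by Lemma~\ref{proper-sol}(1). As $x^g=g^{-1}xg\in H$, both cyclic subgroups $\cyc{x}$ and $\cyc{x^g}=\cyc{x}^g$ lie in $H$, and hence so does their product set $\cyc{x}\cyc{x^g}\sub H\sub\Sol_G(x)$. The product formula for subgroups gives $|\cyc{x}\cyc{x^g}|=|x|\cdot|\cyc{x}:\cyc{x}\cap\cyc{x^g}|$. Because $g\notin\N_G(\cyc{x})$, the index $|\cyc{x}:\cyc{x}\cap\cyc{x^g}|$ is one of the quantities appearing in the definition of $\ell$, so it is at least $\ell$; consequently $|\cyc{x}\cyc{x^g}|\geq\ell|x|$.

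The remaining, and main, point is to upgrade this to a strict inequality, i.e. to produce one further element of $\Sol_G(x)$ lying outside $\cyc{x}\cyc{x^g}$. The natural candidate is $g$ itself, and I would verify that $g\notin\cyc{x}\cyc{x^g}$: a putative relation $g=x^i(x^g)^j=x^ig^{-1}x^jg$ cancels on the right to $1=x^ig^{-1}x^j$, whence $g=x^{i+j}\in\cyc{x}\sub\N_G(\cyc{x})$, contradicting the choice of $g$. Thus $\cyc{x}\cyc{x^g}\cup\{g\}$ is a disjoint union contained in $\Sol_G(x)$, which yields $|\Sol_G(x)|\geq|\cyc{x}\cyc{x^g}|+1\geq\ell|x|+1>\ell|x|$, as required.

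I expect the only delicate step to be this strictness argument, where the whole conclusion hinges on placing the extra element $g$ outside the product set; the size estimate $|\cyc{x}\cyc{x^g}|\geq\ell|x|$ is essentially immediate once the product set is realised inside the soluble group $\cyc{x,g}\sub\Sol_G(x)$ and the definition of $\ell$ is invoked. It is worth double-checking that $g$ really is the right witness rather than some element of $\N_G(\cyc{x})\setminus\cyc{x}$, since the cancellation computation is exactly what rules out $g$ normalising $\cyc{x}$, and it is the only place where the hypothesis $g\notin\N_G(\cyc{x})$ is used twice.
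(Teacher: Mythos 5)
Your proposal is correct and follows essentially the same route as the paper: pick $g\in\Sol_G(x)\setminus\N_G(\cyc{x})$, observe that the product set $\cyc{x}\cyc{x^g}$ lies in the soluble subgroup $\cyc{x,g}\sub\Sol_G(x)$ and has size $|x|\cdot|\cyc{x}:\cyc{x}\cap\cyc{x^g}|\geq\ell|x|$, and get strictness from the fact that $g$ itself lies outside this product set. Your cancellation computation is exactly the detail behind the paper's terse remark that otherwise ``$y\in\cyc{x}$'', so you have simply made the published argument explicit.
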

\begin{proof}
Assume that $\N_G(\cyc{x})\neq\Sol_G(x)$ and  $y\in\Sol_G(x)\bk\N_G(\cyc{x})$. Then $\cyc{x}\cyc{x^y}\subset\cyc{x,y}$, otherwise $y\in\cyc{x}$. Hence $\ell|x|<|\cyc{x,y}|\leq|\Sol_G(x)|$.
\end{proof}

Assume that $x$ is of prime order $p$. In this case $\ell=|x|$, so either $\N_G(\cyc{x})=\Sol_G(x)$ or $|\Sol_G(x)|> p^2$. Now immediately we  conclude the second main result and  Proposition 4.8 of \cite{Akbari1}.
\begin{corollary}\cite[Theorem B]{Akbari1}
Let $G$ be a finite insoluble group and $x$ an element of $G$. Then
$|\Sol_G(x)|\neq p^2$ for any prime $p$.
\end{corollary}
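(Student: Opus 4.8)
The plan is to argue by contradiction. Suppose $|\Sol_G(x)|=p^2$ for some prime $p$. By Lemma~\ref{proper-sol}(2), $|x|$ divides $|\Sol_G(x)|=p^2$, so $x$ is a $p$-element and $|x|\in\{1,p,p^2\}$. I would first clear away the two extreme cases. If $x=1$ then $\Sol_G(x)=G$ (every cyclic group being soluble), so $|G|=p^2$ would force $G$ to be abelian, contradicting insolubility. If $|x|=p^2$, then $\cyc{x}\sub\Sol_G(x)$ together with $|\cyc{x}|=|\Sol_G(x)|$ gives $\Sol_G(x)=\cyc{x}$; but Lemma~\ref{proper-sol}(6) asserts that $\cyc{x}$ is properly contained in $\Sol_G(x)$ whenever $G$ is insoluble, again a contradiction.

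This leaves the essential case $|x|=p$. Here I would invoke the remark immediately preceding the statement: since $\cyc{x}$ has prime order $p$, for every $y\notin\N_G(\cyc{x})$ the intersection $\cyc{x}\cap\cyc{x^y}$ is a proper (hence trivial) subgroup of $\cyc{x}$, so $\ell=|x|=p$. Lemma~\ref{L.1} then yields a dichotomy: either $|\Sol_G(x)|>\ell|x|=p^2$, which directly contradicts the assumption $|\Sol_G(x)|=p^2$, or $\N_G(\cyc{x})=\Sol_G(x)$. Hence the only surviving possibility is that $\Sol_G(x)$ coincides with $\N_G(\cyc{x})$ and is, in particular, a genuine subgroup of $G$ of order $p^2$.

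The decisive step is then to feed this subgroup into Remark~\ref{R1}. Because $G$ is insoluble and $\Sol_G(x)$ is now a subgroup, Remark~\ref{R1} (via the Deskins--Janko--Huppert criterion, Theorem~\ref{Deskins}) forces a Sylow $2$-subgroup of $\Sol_G(x)$ to be non-abelian of order at least $16$. But $\Sol_G(x)$ has order $p^2$, so its Sylow $2$-subgroup is trivial when $p$ is odd and has order $4$ when $p=2$; in neither case can it be non-abelian of order at least $16$. This contradiction completes the argument. I expect the only real subtlety to lie in recognising that $\ell=p$ in the order-$p$ case, so that Lemma~\ref{L.1} traps $\Sol_G(x)$ into being a subgroup; after that, Remark~\ref{R1} does all the work, and the extreme cases $|x|\in\{1,p^2\}$ are routine.
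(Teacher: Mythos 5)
Your argument is correct and follows essentially the same route as the paper: the case $|x|=p$ is handled exactly as in the paragraph preceding the corollary (computing $\ell=p$ and applying Lemma~\ref{L.1} to force $\Sol_G(x)=\N_G(\cyc{x})$, then killing the subgroup case via Remark~\ref{R1}, which applies since a group of order $p^2$ is abelian, hence nilpotent). You merely make explicit the routine extreme cases $|x|\in\{1,p^2\}$ that the paper leaves implicit.
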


\begin{corollary}\label{C.1}
Let $G$ be a finite group. Suppose that $P\in\Syl_p(G)$ for some prime $p$ and $x\in P$ such that $|x|=\exp(P)$. Then either $\N_G(\cyc{x})=\Sol_G(x)$ or $|\Sol_G(x)|> p\exp(P)$.
\end{corollary}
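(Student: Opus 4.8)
The plan is to derive this directly from Lemma~\ref{L.1}, whose dichotomy already gives either $\N_G(\cyc{x})=\Sol_G(x)$ or $|\Sol_G(x)|>\ell|x|$, where $\ell=\min\{|\cyc{x}:\cyc{x}\cap\cyc{x^y}|\mid y\notin\N_G(\cyc{x})\}$. Since we are given $|x|=\exp(P)$, the second alternative reads $|\Sol_G(x)|>\ell\exp(P)$, so the whole statement reduces to verifying the single numerical fact that $\ell\ge p$.

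To bound $\ell$ from below, I would fix an arbitrary $y\notin\N_G(\cyc{x})$ and inspect the index $|\cyc{x}:\cyc{x}\cap\cyc{x^y}|$. First note that $\cyc{x^y}=\cyc{x}^y$, so the condition $y\notin\N_G(\cyc{x})$ is precisely the statement $\cyc{x^y}\neq\cyc{x}$; consequently $\cyc{x}\cap\cyc{x^y}$ is a \emph{proper} subgroup of $\cyc{x}$ and the index in question is strictly greater than $1$. The crucial observation is that, because $x\in P$ with $P$ a $p$-group, the order $|x|=\exp(P)$ is a power of $p$, so $\cyc{x}$ is a cyclic $p$-group. In a cyclic $p$-group every proper subgroup has index divisible by $p$, whence $|\cyc{x}:\cyc{x}\cap\cyc{x^y}|\ge p$. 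Taking the minimum over all admissible $y$ yields $\ell\ge p$.

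Combining the two steps, in the non-degenerate case we obtain $|\Sol_G(x)|>\ell\exp(P)\ge p\exp(P)$, which is exactly the asserted inequality. I do not anticipate any real obstacle here: the corollary is a direct specialization of Lemma~\ref{L.1}, and the only point worth stressing is that the hypothesis $|x|=\exp(P)$ forces $\cyc{x}$ to be a cyclic $p$-group, so that the minimal possible index of a proper subgroup is at least $p$ rather than merely greater than $1$.
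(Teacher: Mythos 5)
Your proposal is correct and follows exactly the route the paper intends: Corollary~\ref{C.1} is stated as an immediate consequence of Lemma~\ref{L.1}, and the only content to supply is that $\ell\ge p$ because $\cyc{x}$ is a cyclic $p$-group whose proper subgroups (in particular $\cyc{x}\cap\cyc{x^y}$ for $y\notin\N_G(\cyc{x})$) have index divisible by $p$. Your verification of that point is accurate, so nothing further is needed.
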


Now we have the following structural theorem.

\begin{theorem}\label{2p}
Let $G$ be a finite insoluble group and for some $x\in G$, $|\Sol_G(x)|=2p$, where $p$ is an odd prime number. Then $G$ is simple and $\N_G(\cyc{x})=\Sol_G(x)$.	
\end{theorem}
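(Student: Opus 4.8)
The plan is to first pin down $|x|$ and then leverage the tight combinatorial structure of a size-$2p$ solubilizer. By Lemma~\ref{proper-sol}-(2), $|x|$ divides $|\Sol_G(x)|=2p$, so $|x|\in\{1,2,p,2p\}$; and since $G$ is insoluble while a Sylow $2$-subgroup of a set of size $2p$ has order $2$, Remark~\ref{R1} tells us $\Sol_G(x)$ is \emph{not} a subgroup of $G$, a fact I would use repeatedly. The cases $|x|=1$ and $|x|=2p$ fall quickly: $x=1$ gives $\Sol_G(x)=G$ of order $2p$, so $G$ is soluble; and $|x|=2p$ gives $\cyc{x}=\Sol_G(x)$, contradicting Lemma~\ref{proper-sol}-(6). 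So the real work is to exclude $|x|=2$ and then analyse $|x|=p$.

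The elimination of $|x|=2$ is the first delicate point. For any $g\in\Sol_G(x)$ the subgroup $\cyc{x,g}$ is soluble and contained in $\Sol_G(x)$; were some such $g$ of order divisible by $p$, then $\cyc{x,g}$ would have order $2p$ and hence equal $\Sol_G(x)$, forcing it to be a subgroup — impossible. Thus every nontrivial element of $\Sol_G(x)$ is an involution, so the subgroup $\C_G(x)\sub\Sol_G(x)$ is an elementary abelian $2$-group whose order divides $2p$, giving $\C_G(x)=\cyc{x}$. A self-centralizing involution has a Sylow $2$-subgroup of order $2$ (in a $2$-group $P\ni x$ with $|P|>2$ nilpotency forces $\cyc{x}<\N_P(\cyc{x})=\C_P(x)\sub\C_G(x)$), so Burnside's normal $p$-complement theorem yields a normal $2$-complement of odd order, which is soluble by Feit--Thompson; then $G$ is soluble, a contradiction. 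Hence $|x|=p$.

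With $|x|=p$, the remark following Lemma~\ref{L.1} gives $\ell=|x|$, so either $\N_G(\cyc{x})=\Sol_G(x)$ or $|\Sol_G(x)|>p^2$; since $2p<p^2$ the latter is impossible, which already proves the normalizer identity $\N_G(\cyc{x})=\Sol_G(x)$. From $\cyc{x}\sub\C_G(x)\sub\N_G(\cyc{x})$ of order $2p$ I get $|\C_G(x)|\in\{p,2p\}$, and $|\C_G(x)|=2p$ would make $\Sol_G(x)=\C_G(x)$ a subgroup; so $\C_G(x)=\cyc{x}$, whence (by the same nilpotency argument as above) $\cyc{x}$ is a self-centralizing Sylow $p$-subgroup and $\N_G(\cyc{x})\cong D_{2p}$. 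In particular a Sylow $p$-subgroup of $G$ has order $p$.

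It remains to prove $G$ simple, and this is where I expect the main obstacle: organizing the minimal-normal-subgroup analysis cleanly. First I would show $R(G)=1$: by Lemma~\ref{proper-sol}-(5), $|R(G)|$ divides $2p$; the value $2p$ makes $R(G)=\Sol_G(x)$ a subgroup, the value $p$ makes $\cyc{x}=R(G)$ a normal Sylow $p$-subgroup so that $x\in R(G)$ and $\Sol_G(x)=G$, and the value $2$ produces an insoluble quotient $\bar G=G/R(G)$ with $|\Sol_{\bar G}(\bar x)|=p=|\cyc{\bar x}|$ by Lemma~\ref{proper-sol}-(4),(5), contradicting Lemma~\ref{proper-sol}-(6). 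Given $R(G)=1$, a minimal normal subgroup $N$ is a direct product of nonabelian simple groups. If $p\nmid|N|$ then $\C_N(x)=N\cap\cyc{x}=1$, so $x$ induces a fixed-point-free automorphism of prime order on $N$, forcing $N$ nilpotent by Thompson's theorem — impossible. Hence $p\mid|N|$, and since a Sylow $p$-subgroup of $G$ has order $p$, the same holds for $N$; comparing with $N\cong S_1\times\cdots\times S_k$ forces $k=1$, so $N$ is simple and contains $\cyc{x}$ as a Sylow $p$-subgroup. Finally the Frattini argument gives $G=N\,\N_G(\cyc{x})$, so $[G:N]=2p/|\N_N(\cyc{x})|\in\{1,2\}$; an index of $2$ would make $\cyc{x}$ a self-normalizing subgroup of prime order in the insoluble group $N$, contradicting Lemma~\ref{lem self}. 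Therefore $G=N$ is simple, completing the proof.
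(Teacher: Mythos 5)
Your proposal is correct and follows essentially the same route as the paper's: you pin down $\C_G(x)=\cyc{x}$ as a self-centralizing Sylow $p$-subgroup, obtain $\N_G(\cyc{x})=\Sol_G(x)$ from Lemma~\ref{L.1}, and settle simplicity via a minimal normal subgroup, the Frattini argument, Lemma~\ref{lem self}, and Thompson's nilpotency theorem --- precisely the ingredients of the paper's proof, with the only differences being that you make explicit the elimination of $|x|=2$ and the step $R(G)=1$, which the paper compresses into ``$|x|=p$ because $G$ is insoluble.'' One cosmetic slip: from ``no element of $\Sol_G(x)$ has order divisible by $p$'' you cannot infer that every nontrivial element of $\Sol_G(x)$ is an involution (an element of order $4$ is not excluded by your argument), but the conclusion you actually use --- that $|\C_G(x)|$ divides $2p$ yet cannot be $p$ or $2p$, whence $\C_G(x)=\cyc{x}$ --- follows anyway by Cauchy's theorem.
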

\begin{proof}
By Lemma \ref{proper-sol}-(2), (10), we have $\cyc{x}=C_G(x)$ is of prime order (otherwise, $\Sol_G(x)=C_G(x)$ is abelian, a contradiction). Therefore $\cyc{x}$ is a Sylow subgroup of $G$ and $|x|=p$, because $G$ is insoluble. Also by Lemma~\ref{L.1}, $\Sol_G(x)=\N_G(\cyc{x})$.

Assume that $G$ is not simple and $N$ is a  minimal normal subgroup of $G$. If $p\mid\,|N|$, then $G=N\N_G(P)$. So
 $\N_N(P)=P$ and by Lemma \ref{lem self}, $N$ is soluble, which contradicts the insolubility of $G$. Therefore $p\nmid|N|$, so
 $P\cap N={1}$ and $PN$ is a Frobenius group with kernel $N$ (because $P=\C_G(P)$). Hence $N$ is nilpotent and $NP$ is soluble. It implies that $NP\leq \Sol_G(x)=\N_G(P)$, so $|N|=2$ and $\Sol_G(x)$ is abelian, a contradiction.
\end{proof}
\begin{remark}\label{pq}
Similar to the proof of Theorem~\ref{2p}, we can see that if $|\Sol_G(x)|=pq$, where $|x|=q>p$ are primes, then $G$ is simple and  $N_G(\cyc{x})=\Sol_G(x)$.
\end{remark}

By GAP \cite{GAP} we get $|\Sol_{S_{7}}((1~2)(3~4))|=2\cdot 3 \cdot 7$, $|\Sol_{S_{5}}((1~2~3)(4~5))|=2^2\cdot 3$ and $|\Sol_G(x)|=2\cdot 3 \cdot 7$, where $G=\PSL(2,11)$ and $x$ is an element of order $3$. These examples show that if the $|\Sol_G(x)|$  for some $x\in G$ is the product of more than two prime numbers, then G is not necessarily simple.

Again by GAP, we observe that $\Sol_{A_5}(x)\cong D_{10}$, for some $x$ of order $5$ and $\Sol_{\PSL(2,7)}(x)\cong C_7\rtimes C_3$. Now the following problem arises naturally.

\begin{problem}
Suppose that $p$ and $q$ are two distinct primes numbers. Find the structure of finite simple groups $G$ such that,  $|\Sol_G(x)|=pq$ for some $x\in G$.
\end{problem}

In \cite[Lemma 4.2]{Akbari1}, the authors show that (with a long proof) $|\Sol_G(x)|\neq 8$ for all insoluble group $G$ and $x\in G$. In the following  we give a short proof of the this lemma.

Let $Q\in\Syl_2(G)$. Since $Q$ is not cyclic and by Theorem~\ref{Deskins}, $\Sol_G(x)$ is not a subgroup of $G$, so  $2< |Q|\leq |\N_G(Q)|< 8$. Therefore $Q=\N_G(Q)$ is of order $4$. Thus $G$ is $2$-nilpotent, a contradiction.

\begin{theorem}\label{T.2}
Let $G$ be a finite insoluble group and $x\in G$. If $|\Sol_G(x)|=16$, then  $\Sol_G(x)\sub G$.
\end{theorem}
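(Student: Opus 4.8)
The plan is to prove the equivalent statement that $G$ has a Sylow $2$-subgroup of order $16$. First note that $|x|$ divides $|\Sol_G(x)|=16$ by Lemma~\ref{proper-sol}(2), so $x$ is a non-trivial $2$-element (if $x=1$ then $\Sol_G(x)=G$ is a $2$-group, so $G$ is soluble). Fix $P\in\Syl_2(G)$ with $x\in P$. As $P$ is a $2$-group it is soluble, so $P\subseteq\Sol_G(x)$ and $|P|=2^a\le 16$; if $|P|=16$ then $\Sol_G(x)=P$ is a subgroup, and conversely any subgroup of order $16=2^4$ is a $2$-group and hence forces $|P|=16$. Two uniform reductions apply. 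Writing $O(G)$ for the largest normal subgroup of odd order, Lemma~\ref{proper-sol}(4),(5) yield $16=|O(G)|\cdot|\Sol_{G/O(G)}(xO(G))|$, and since $|O(G)|$ is odd we get $O(G)=1$. By Lemma~\ref{proper-sol}(10) $|\C_G(x)|$ divides $16$, so $\C_G(x)$ is a $2$-group; and by the remark following Lemma~\ref{proper-sol}, $\N_G(P)\subseteq\Sol_G(x)$, so $|\N_G(P)|\le 16$. If $P$ is cyclic then $G$ has a normal $2$-complement of odd order and is soluble, a contradiction; so $P$ is non-cyclic and it remains to rule out $|P|\in\{4,8\}$, in each case by exhibiting a soluble subgroup containing $x$ of order greater than $16$.

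Suppose $|P|=4$, so $P\cong V_4$. By the Gorenstein--Walter theorem an insoluble group with $O(G)=1$ and Klein four Sylow $2$-subgroup is isomorphic to $\PSL(2,q)$ with $q\equiv\pm3\pmod 8$. Here $x$ is an involution and $\C_G(x)$ is dihedral of order $q-1$ or $q+1$, with $2$-part exactly $4$; as $|\C_G(x)|$ divides $16$ this forces $|\C_G(x)|=4$, whence $q=5$ and $G\cong A_5$. But in $A_5$ the involution $x$ lies in a subgroup isomorphic to $A_4$ and in one isomorphic to $D_{10}$, both soluble and meeting only in $\cyc{x}$, so $|\Sol_G(x)|\ge 12+10-2=20>16$, a contradiction.

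Suppose $|P|=8$. If $\N_G(P)\neq P$ then $|\N_G(P)|$ is an odd multiple of $8$ larger than $8$, hence $\ge 24>16$, contradicting $|\N_G(P)|\le 16$. This already excludes the abelian case $P\cong C_2^3$ (a self-normalizing abelian $P$ would give, by Burnside and $O(G)=1$, that $G$ is soluble) and the quaternion case $P\cong Q_8$ (where by Brauer--Suzuki $G$ is of $\SL(2,q)$ type and $\N_G(P)\cong\SL(2,3)$ has order $24$). In the remaining case $\N_G(P)=P$ with $P\cong D_8$, the Gorenstein--Walter theorem identifies $G$ as $\PSL(2,q)$, $\PGL(2,q)$ or $A_7$, each containing a subgroup $S\cong S_4$ whose Sylow $2$-subgroup is $D_8$; since all Sylow $2$-subgroups are conjugate, $P$ lies in a conjugate of $S$, so $x$ does too and $|\Sol_G(x)|\ge|S_4|=24>16$, a contradiction.

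Hence $|P|=16$ and $\Sol_G(x)=P$ is a subgroup, which by Remark~\ref{R1} is a non-abelian Sylow $2$-subgroup of $G$. I expect the case $|P|=8$ with self-normalizing $P$ to be the main obstacle: it cannot be settled by the elementary normalizer bound alone and appears to require the classification of insoluble groups with a dihedral Sylow $2$-subgroup (Gorenstein--Walter), supplemented by Brauer--Suzuki for the quaternion type, in order to locate the soluble overgroup $S_4$ of $x$.
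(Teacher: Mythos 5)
Your reduction (show the Sylow $2$-subgroup has order $16$, hence equals $\Sol_G(x)$, by eliminating $|P|=4$ and $|P|=8$) is the same skeleton the paper uses, and your $|P|=4$ case is sound --- in fact your elementary bound $|\Sol_{A_5}(x)|\ge|A_4\cup D_{10}|=12+10-2=20$ is nicer than the paper's GAP computation $|\Sol_{A_5}(x)|=36$, though you reach $A_5$ via Gorenstein--Walter plus the centralizer order where the paper first proves $G$ simple by hand and then quotes \cite[Theorem 15.2.5]{Gorenstein}. The real divergence is at $|P|=8$: the paper disposes of this case in two lines with no classification theorems at all. Every group of order $8$ has nilpotency class at most $2$, so Theorem~\ref{Deskins} applies to any subgroup $M$ in which $P$ is maximal; then $M$ is soluble, $x\in P<M\subseteq\Sol_G(x)$ forces $|M|=16$, and $M=\Sol_G(x)$ is a subgroup. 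Your substitute --- a case split over the five groups of order $8$ using Burnside, Brauer--Suzuki and Gorenstein--Walter --- is far heavier, and it is exactly where your argument has a genuine hole.

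The hole is the $Q_8$ sub-case. Brauer--Suzuki does not say that ``$G$ is of $\SL(2,q)$ type with $\N_G(P)\cong\SL(2,3)$''; it says that $G/O(G)$ has centre of order $2$, and the normalizer claim is simply false in general (for $G=\SL(2,27)\rtimes C_3$ one has $O(G)=1$, Sylow $2$-subgroup $Q_8$, and $|\N_G(P)|=72$ by the Frattini argument, so $\N_G(P)\ncong\SL(2,3)$ even though $|\N_G(P)|>16$ happens to hold). Two honest repairs within your framework: (a) with $O(G)=1$, Brauer--Suzuki yields a central involution $z$, and then $G/\cyc{z}$ is insoluble with $|\Sol_{G/\cyc{z}}(x\cyc{z})|=8$ by Lemma~\ref{proper-sol}-(4),(5), contradicting the fact, proved just before this theorem, that $|\Sol|\ne 8$ for insoluble groups; or (b) if $\N_G(P)=P$ with $P\cong Q_8$, then $\N_G(Q)/\C_G(Q)$ is a $2$-group for every $2$-subgroup $Q$ (the proper subgroups of $Q_8$ are cyclic of order dividing $4$), so Frobenius's normal $p$-complement theorem plus $O(G)=1$ makes $G$ a $2$-group, a contradiction. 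A smaller inaccuracy of the same flavour: Gorenstein--Walter concludes $\PSL(2,q)\sub G\sub\mathrm{P\Gamma L}(2,q)$ (or $A_7$), so your lists in both the $V_4$ and $D_8$ cases omit extensions by odd-order field automorphisms; your $S_4$-subgroup and centralizer arguments do survive those extra groups, but they must be accounted for.
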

\begin{proof}
Suppose that $\Sol_G(x)$ is not a subgroup of $G$ and $Q$ is a Sylow $2$-subgroup of $G$ containing $x$. Then  $4\leq |Q|\leq 8$ and $|\mathcal{N}_G(Q)|< 16$.  Also $R(G)=1$, otherwise $|\Sol_{G/R(G)}(xR(G))|$ is a power of $2$ less than or equal to $8$, a contradiction. Now, the following two cases can be distinguished.

Case 1: $|Q|=8$.

Assume that $M$ is a subgroup of $G$ such that  $Q$ is a maximal subgroup of $M$. Then $M$ is soluble by Theorem~\ref{Deskins}. Since $|Q|<|M|\leq |\Sol_G(x)|$, thus $M=\Sol_G(x)$, which is a contradiction.

Case 2: $|Q|=4$

Since $Q$ is not a self-normalizing subgroup of $G$, $|\mathcal{N}_G(Q)|=12$ and $\mathcal{C}_G(Q)=Q$ is elementary abelian. Assume that $G$ is not simple and $N$ is a normal subgroup of $G$. Then $2\mid |N|$ because $R(G)=1$. If $|Q\cap N|=2$ then $N$ is $2$-nilpotent and normal $2$-complement of $N$ is characteristic of odd order, which is a contradiction. Thus $Q\leqslant N$ and $G=N\mathcal{N}_G(Q)$, by Frattini argument. Therefore $\mathcal{N}_N(Q)=Q=\mathcal{C}_N(Q)$, again $N$ is $2$-nilpotent, which is a contradiction. 

Thus $G$ is simple and  $Q=\mathcal{C}_G(x)$, because $|\mathcal{C}_G(x)|\mid |\Sol_G(x)|$. Now by \cite[Theorem 15.2.5]{Gorenstein}, $G\cong A_5$. Since all involutions in $A_5$ are conjugate so $|\Sol_G(x)|=36$ by using GAP, which is final contradiction.
\end{proof}

\section{$\Sol$ as a $2$-subgroup}
Let $G$ be a finite insoluble group and $x$ be an element of $G$. By Remark~\ref{R1}, $\Sol_G(x)$ cannot admits the structure of a $p$-group, where $p$ is an odd prime number. Also by Lemma~\ref{L.1}, $|\Sol_G(x)|\neq p^n$ if $|x|=p^{n-1}$ where $p$ is odd.  
 Now in the following we show that, for some $x\in G$, $\Sol_G(x)$ can be a $2$-subgroup of $G$ of order grater than $8$. Also $\Sol_G(x)$ can be of size $2^n$ when $|x|=2^{n-1}$.

\begin{lemma}\label{psl}
Let  $G$ be a finite insoluble group and $\Sol_G(x)$ be a $2$-subgroup of $G$ for some $x\in G$. Then $\Sol_G(x)$ is a Sylow $2$-subgroup of $G$. In addition, if $G/R(G)$ is isomorphic to direct product of $\ell$ copies of $\PSL(2,p)$, where $p$ is prime, then $p\geq 31$ is a Mersenne prime number, $|x|\geq 8$ and $|\Sol_G(x)|=\ell(p+1)|R(G)|$.
\end{lemma}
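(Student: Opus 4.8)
The plan is to first dispose of the Sylow assertion and reduce to an almost-simple situation, and then to analyse a single factor $\PSL(2,p)$ via Dickson's description of its subgroups. For the Sylow claim, observe that $x\in\Sol_G(x)$ forces $x$ to be a $2$-element; if $Q\in\Syl_2(G)$ contains $x$, then $Q$ is a soluble subgroup containing $x$, so $Q\sub\Sol_G(x)$ by Lemma~\ref{proper-sol}-(1), and since $\Sol_G(x)$ is a $2$-group we get $\Sol_G(x)=Q\in\Syl_2(G)$. By Lemma~\ref{proper-sol}-(1) we also have $R(G)\sub\Sol_G(x)$, so $R(G)$ is a $2$-group; being soluble, Lemma~\ref{proper-sol}-(4),(5) give $\Sol_{\bar G}(\bar x)=\Sol_G(x)/R(G)$ and $|\Sol_G(x)|=|R(G)|\,|\Sol_{\bar G}(\bar x)|$, where $\bar G=G/R(G)\cong\PSL(2,p)^{\ell}$ and $\bar x=xR(G)$.

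A short subdirect-product computation shows that for $\bar x=(x_1,\dots,x_\ell)$ one has $\Sol_{\bar G}(\bar x)=\prod_{i=1}^{\ell}\Sol_{\PSL(2,p)}(x_i)$, since $\cyc{\bar x,\bar g}$ is soluble precisely when every coordinate group $\cyc{x_i,g_i}$ is. Hence $\Sol_{\bar G}(\bar x)$ is a $2$-group iff each $\Sol_{\PSL(2,p)}(x_i)$ is a $2$-group; note no $x_i$ may be trivial, as $\Sol_{\PSL(2,p)}(1)=\PSL(2,p)$. Thus everything reduces to the single-factor statement, so fix $H=\PSL(2,p)$ with $p\geq5$ and $y\in H$ such that $\Sol_H(y)$ is a $2$-group. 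By the Sylow claim $\Sol_H(y)=P\in\Syl_2(H)$, and by Remark~\ref{R1} $P$ is non-abelian of order at least $16$; as the Sylow $2$-subgroups of $\PSL(2,p)$ are dihedral, $P\cong D_{2^a}$ with $a\geq4$. Using Dickson's list, every proper soluble subgroup of $H$ lies in a Borel subgroup $C_p\rtimes C_{(p-1)/2}$, in a dihedral $D_{p-1}$ or $D_{p+1}$, or in $A_4$ or $S_4$ (the subgroup $A_5$, being insoluble, never contributes to $\Sol_H(y)$). Since all involutions of $H$ are conjugate and $A_4\le H$ contains involutions, an involution $y$ would give $\Sol_H(y)\supseteq A_4^{h}$, which is not a $2$-group; hence $|y|\geq4$, and therefore $y$ lies in the cyclic rotation subgroup $T$ of $P$, namely the $2$-part of a maximal torus.

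The crux is to decide which torus. If $p\equiv1\pmod4$, then $T$ is the $2$-part of the split torus $C_{(p-1)/2}$, which is a complement in a Borel subgroup $B=C_p\rtimes C_{(p-1)/2}$; then $y\in B$ gives $\Sol_H(y)\supseteq B$, contradicting that $\Sol_H(y)$ is a $2$-group. So $p\equiv3\pmod4$ and $T$ is the $2$-part of the non-split torus $C_{(p+1)/2}$, with $\N_H(C_{(p+1)/2})=D_{p+1}$. Now $y\in C_{(p+1)/2}\le D_{p+1}$ and $D_{p+1}$ is soluble, so $\Sol_H(y)\supseteq D_{p+1}$; since $\Sol_H(y)$ is a $2$-group, the odd part of $p+1$ must be trivial, i.e. $p+1=2^{a}$ and $p=2^{a}-1$ is a Mersenne prime with $P=D_{p+1}$. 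As $|P|=2^{a}\geq16$ and $p$ is prime, the first admissible value is $a=5$, $p=31$, whence $p\geq31$. Moreover $p\equiv-1\pmod8$, so $S_4\le H$; since the elements of order $4$ are, up to conjugacy, the order-$4$ elements of the non-split torus and $S_4$ contains such elements, an element $y$ of order $4$ would be conjugate into $S_4$, again producing a non-$2$-subgroup inside $\Sol_H(y)$; hence $|y|\geq8$. For such $y$ the only maximal subgroups of $H$ containing an element of order $\geq8$ are the Sylow $2$-subgroups (the Borel has odd order $(2^{a}-1)(2^{a-1}-1)$, $D_{p-1}$ has $2$-part of order $2$, and $A_4,S_4$ contain no element of order $8$); since $\cyc{y}$ is characteristic in $T$ and hence in any Sylow $2$-subgroup containing it, every such Sylow $2$-subgroup equals $\N_H(\cyc{y})=D_{p+1}$, so $\Sol_H(y)=D_{p+1}$ has order $p+1$. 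Multiplying over the $\ell$ factors and restoring $R(G)$ yields $|\Sol_G(x)|=(p+1)^{\ell}\,|R(G)|$, while $|x|\geq|\bar x|\geq8$.

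The main obstacle is the detailed $\PSL(2,p)$ bookkeeping: one must combine Dickson's subgroup classification with the conjugacy data (all involutions conjugate, a single class of elements of order $4$) to guarantee that every $2$-element of order at most $4$ is swallowed by a soluble subgroup of non-trivial odd order. The decisive point is the structural asymmetry that the split torus lies inside a Borel subgroup whereas the non-split torus does not, which is exactly what excludes the Fermat primes and isolates the Mersenne case $p+1=2^{a}$.
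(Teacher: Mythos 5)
Your argument is correct and, at bottom, rests on the same ingredients as the paper's proof: Dickson's list of soluble subgroups of $\PSL(2,p)$, the fact that $2$-elements of order at most $4$ are absorbed by a conjugate of $A_4$ or $S_4$ (forcing $3\mid|\Sol_H(y)|$), the fact that the split torus sits inside a Borel subgroup (killing the Fermat / $p\equiv 1\pmod 4$ case via $p\mid|\Sol_H(y)|$), and the fact that the non-split torus normalizer $D_{p+1}$ is soluble, which forces $p+1$ to be a power of $2$. The organization differs in two useful ways. First, you reduce completely to a single factor by proving $\Sol_{\bar G}(\bar x)=\prod_i\Sol_{\PSL(2,p)}(x_i)$ (a clean iteration of Lemma~\ref{product}), whereas the paper keeps all $\ell$ factors in play and builds diagonal soluble subgroups such as $\cyc{\bar x,\bar y}\rtimes\cyc{\bar s}\cong A_4$ and $\cyc{s_i,\bar x}$; your route also makes explicit that each $x_i\neq 1$, which the paper leaves implicit. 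Second, by applying Remark~\ref{R1} (via Theorem~\ref{Deskins}) to $\Sol_H(y)=P\in\Syl_2(\PSL(2,p))$ you get that $P$ is dihedral of order at least $16$ at the outset, which disposes of the $p\equiv\pm3\pmod 8$ (Klein four) case that the paper must handle separately through the $\N_{G_i}(Q_i)\cong A_4$ argument. One point to record: you conclude $|\Sol_G(x)|=(p+1)^{\ell}\,|R(G)|$, and this is exactly what the paper's own proof establishes, since it ends with $\Sol_G(x)/R(G)\cong D_{p+1}\times\cdots\times D_{p+1}$ ($\ell$ copies); the formula $\ell(p+1)|R(G)|$ printed in the statement is evidently a typo for $(p+1)^{\ell}|R(G)|$, so your proof matches the intended (and correct) conclusion rather than the literal one.
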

\begin{proof}
By Lemma~\ref{proper-sol}-(2), $x$ is a $2$-power order too.  Let $Q$ be a suitable Sylow $2$-subgroup of $G$ such that $x\in Q=\Sol_G(x)$.

Set $\bar{G}=G/R(G)$ and $\bar{x}=xR(G)$. As $R(G)\sub Q$, $\Sol_{\bar{G}}(\bar{x})$ is of $2$-power order. Assume that $G=G_1\cdots G_{\ell}$, where $G_i\cong\PSL(2,p)$ and $[G_i,G_j]=1$ for any $1\leq i<j\leq\ell$. By Lemma~\ref{product}, $\bar{x}\not\in G_i$, for all $i\leq\ell$. Let $\bar{x}=x_1\cdots x_{\ell}$, where $x_i\in G_i$ is an $2$-element.

A Sylow $2$-subgroup of $G_i$ is isomorphic to either $C_2\times C_2$, where $p\equiv \pm 3\pmod{8}$ or  a self-normalizing maximal subgroup of dihedral type, where $p\geq 17$ is a Fermat or Mersenne prime number. 

In the first case, we can assume that $Q_i=\cyc{x_i,y_i}$ is Sylow $2$-subgroup of $G_i$ and $\N_{G_i}(Q_i)=\cyc{x_i,y_i}\rtimes\cyc{s_i}$, where $|s_i|=3$. Now $\cyc{\bar{x},\bar{y}}\rtimes\cyc{\bar{s}}\cong A_4$, where $\bar{y}=y_1\cdots y_{\ell}$ and $\bar{s}=s_1\cdots s_{\ell}$. Therefor $3\mid\,
|\Sol_{\bar{G}}(\bar{x})|\mid\,|\Sol_G(x)|$, a contradiction. 

In the second case if  $p$ is a Fermat prime number, by \cite[Theorem 6.25]{Suzuki}, $G_i$ has a maximal subgroup $M_i=\cyc{s_i,y_i}\cong C_p\rtimes C_{(p-1)/2}$, where $|s_i|=p$ and $y_i$ is an $2$-element of order $(p-1)/2$. Also $G_i$ contains a subgroup $D_i=\cyc{s_i,y_i}$ isomorphic to $D_{2p}$, where $|s_i|=p$ and $|y_i|=2$. If for some $i$, $\cyc{x_i}$ is a maximal cyclic subgroup of order $2$, then $x_i\in D_i$, otherwise we can assume that $x_i\in M_i$ for all $i$. So in any case $\cyc{s_i,x_i}$ is soluble.
 Since $\cyc{s_i,x_1\cdots x_{\ell}}\sub \cyc{s_i, x_i,\prod_{i\neq j}x_j}$, so it is soluble, thus $p\mid|\Sol_{\bar{G}}(\bar{x})|$, a contradiction.
Thus $p$ is a Mersenne prime number.

If for some $i$, $|x_i|\leq 4$, then $G_i$ has a maximal subgroup $M_i$ isomorphic to $S_4$ such that $x_i\in M_i$. Hence for some $i$, $\cyc{x_i, s_i}$ is soluble where $|s_i|=3$. Thus $\cyc{s_i, x_i\cdots x_{\ell}}$ is soluble and so $3\mid|\Sol_{\bar{G}}(\bar{x})|$, thus $p\neq 7$. Therefore $p\geq 31$
and $$\Sol_{G}(x)/R(G)=Q/R(G)\cong\underset{\ell}{\underbrace{ D_{p+1}\times\cdots\times D_{p+1}}}.$$
\end{proof}

\begin{corollary}\label{2-power_1}
Let $G$ be a finite minimal insoluble group and $\Sol_G(x)$  a $2$-subgroup of $G$  for some $x\in G$. Then $G/R(G)\cong\PSL(2,p)$,  where $p>31$ is  Mersenne prime and $p\not\equiv \pm 1\pmod{5}$.
\end{corollary}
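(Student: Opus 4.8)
The plan is to pass to the quotient $\bar G:=G/R(G)$, identify it as a minimal simple group, eliminate every Thompson family except the linear groups over a prime field by a Sylow-$2$ analysis, and then read off the conclusion from Lemma~\ref{psl}.

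First I would record the elementary reductions. Since $G$ is insoluble, $x\notin R(G)$ (otherwise $x$ is a radical element and $\Sol_G(x)=G$, which is not a $2$-group), so $\bar x:=xR(G)\neq 1$. By Lemma~\ref{proper-sol}-(2) the order $|x|$ divides the $2$-power $|\Sol_G(x)|$, so $x$, and hence $\bar x$, is a nontrivial $2$-element; and by Lemma~\ref{proper-sol}-(4), $\Sol_{\bar G}(\bar x)=\Sol_G(x)/R(G)$ is again a $2$-group. Next I would show that $\bar G$ is a minimal simple group: every proper subgroup of $\bar G$ is the image of a proper (hence soluble) subgroup of the minimal insoluble group $G$, so is soluble, while $R(\bar G)=1$. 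A minimal normal subgroup $N\nor\bar G$ cannot be soluble (it would lie in $R(\bar G)=1$), so $N$ is a direct product of nonabelian simple groups; since any proper subgroup of $\bar G$ is soluble we must have $N=\bar G$, and since a proper direct factor would again be a proper insoluble subgroup, $\bar G$ is a nonabelian simple group all of whose proper subgroups are soluble.

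By Thompson's classification of minimal simple groups, $\bar G$ is isomorphic to one of $\PSL(2,2^{q})$ ($q$ prime), $\PSL(2,3^{q})$ ($q$ an odd prime), $\PSL(2,r)$ ($r>3$ prime with $r\not\equiv\pm1\pmod 5$), $\mathrm{Sz}(2^{q})$ ($q$ an odd prime), or $\PSL(3,3)$. I would then eliminate every family except $\PSL(2,r)$ by the following uniform mechanism: if a nontrivial $2$-element lies in some soluble subgroup $M$ whose order is divisible by an odd prime, then applying this to $\bar x$ gives $M\subseteq\Sol_{\bar G}(\bar x)$ by Lemma~\ref{proper-sol}-(1), so $\Sol_{\bar G}(\bar x)$ contains an element of odd order, contradicting that it is a $2$-group. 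Concretely: in $\PSL(2,2^{q})$ and $\mathrm{Sz}(2^{q})$ a Sylow $2$-subgroup $Q$ has soluble Frobenius normalizer (Borel subgroup) $M=Q\rtimes C_{2^{q}-1}$ with $2^{q}-1>1$ odd; in $\PSL(2,3^{q})$ one has $3^{q}\equiv 3\pmod 8$, so the Sylow $2$-subgroup is a Klein four-group whose normalizer $M\cong A_4$ is soluble and contains an element of order $3$; and in $\PSL(3,3)$ a maximal parabolic $M\cong 3^{2}\rtimes\GL(2,3)$ is soluble, has order $2^{4}\cdot 3^{3}$, and contains a full Sylow $2$-subgroup. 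Hence $\bar G\cong\PSL(2,p)$ for a prime $p$.

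Finally, $G/R(G)\cong\PSL(2,p)$ is exactly the hypothesis of the ``in addition'' clause of Lemma~\ref{psl} with $\ell=1$, which yields that $p\geq 31$ is a Mersenne prime (and $|x|\geq 8$). On the other hand $\PSL(2,p)$ being minimal simple forces $p\not\equiv\pm1\pmod 5$; since $31\equiv 1\pmod 5$ this rules out $p=31$, leaving $p>31$, which is the asserted conclusion. I expect the main obstacle to be the case-by-case Sylow-$2$ analysis eliminating the four non-prime-field families: in each case one must pin down a soluble overgroup of an arbitrary $2$-element whose order carries a nontrivial odd part, and this relies on the precise Sylow-$2$ structure together with the description of the relevant soluble Borel, parabolic, or torus-normalizer subgroups in each family.
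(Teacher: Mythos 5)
Your proof is correct, but it takes a genuinely different route from the paper's. The paper never invokes Thompson's classification of minimal simple groups: instead it observes that, since $G$ is minimal insoluble, the Sylow $2$-subgroup $Q=\Sol_G(x)$ (Lemma~\ref{psl}) is actually a \emph{maximal subgroup} of $G$ (any proper subgroup strictly containing $Q$ would be soluble, contain $x$, and hence be $\Sol_G(x)$ itself), so $G$ is an insoluble group with a nilpotent maximal subgroup; the Baumann and Gorenstein--Walter theorems then give $G/R(G)\cong\PSL(2,p)$ with dihedral Sylow $2$-subgroups directly, after which Lemma~\ref{psl} forces $p\geq 31$ Mersenne and the presence of a maximal $A_5$ when $p\equiv\pm1\pmod 5$ (Dickson) excludes that congruence and hence $p=31$. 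You instead pass to $\bar G=G/R(G)$, show it is minimal simple, run through Thompson's list, and kill $\PSL(2,2^q)$, $\mathrm{Sz}(2^q)$, $\PSL(2,3^q)$ and $\PSL(3,3)$ by exhibiting in each a soluble subgroup of even order with nontrivial odd part containing (a conjugate of) any given $2$-element, which via Lemma~\ref{proper-sol}-(1) contradicts $\Sol_{\bar G}(\bar x)$ being a $2$-group; your case analysis is sound, and your elimination of $\PSL(2,3^q)$ is essentially the ``first case'' already carried out inside the proof of Lemma~\ref{psl}. What the paper's route buys is brevity and no case analysis (at the price of quoting Baumann/Gorenstein--Walter and needing the maximality of $Q$); what your route buys is independence from the nilpotent-maximal-subgroup literature and a self-contained Sylow-$2$ mechanism, at the price of invoking the heavier Thompson $N$-group classification. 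You also obtain $p\not\equiv\pm1\pmod 5$ for free from Thompson's list rather than deriving it from the $A_5$-subgroup criterion, though both ultimately rest on Dickson's subgroup theorem.
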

\begin{proof}
Assume that  $\Sol_G(x)$ is a $2$-subgroup of $G$. By Lemma~\ref{psl}, $Q:=\Sol_G(x)$ is a Sylow $2$-subgroup of $G$ and $R(G)=\Fit(G)$. Since $G$ is minimal insoluble, $\Phi(G)=\Fit(G)$, also $Q$ is a maximal subgroup of $G$. Hence $G/R(G)$ is a minimal simple group with nilpotent maximal subgroup. According to \cite[Main Theorem]{Baumann} and \cite[Theorem 2]{GorWal},  $G/R(G)\cong \PSL(2,p)$ with dihedral Sylow $2$-subgroups. By Lemma~\ref{psl}, $p$ is a  Mersenne prime number, where $p\geq 31$. If $p\equiv \pm 1\pmod{5}$, then $\PSL(2,p)$ contains $A_5$ as its maximal subgroup, a contradiction, in particular $p> 31$.
\end{proof}

\begin{theorem}\label{2-power}
Let $G$ be a finite insoluble group and $\Sol_G(x)$ be a $2$-subgroup of $G$ for some $x\in G$.  If $\Sol_G(x)$ is a maximal subgroup of $G$, then 
 $N\leqslant G/R(G)\leqslant \Aut(N)$, where $N$ is a direct 
product of copies of $\PSL(2,p)$, where $p \geq 31$ is a Mersenne prime number.
\end{theorem}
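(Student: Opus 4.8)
The plan is to descend to the quotient $\bar{G}=G/R(G)$ and read off the structure from its socle. First I would record, using Lemma~\ref{psl}, that $Q:=\Sol_G(x)$ is a Sylow $2$-subgroup of $G$, and that $R(G)\sub Q$ because every radical element lies in $\Sol_G(x)$; hence $R(G)$ is a normal $2$-subgroup of $G$ contained in the maximal subgroup $Q$. Setting $\bar{G}=G/R(G)$, $\bar{Q}=Q/R(G)$ and $\bar{x}=xR(G)$, the correspondence theorem makes $\bar{Q}$ a maximal subgroup of $\bar{G}$ which is again a Sylow $2$-subgroup, while $R(\bar{G})=1$. Consequently $F(\bar{G})=1$ and the socle $N:=\operatorname{Soc}(\bar{G})=F^{*}(\bar{G})=T_1\times\cdots\times T_k$ is a direct product of nonabelian simple groups with $\C_{\bar{G}}(N)=1$, so $N\sub\bar{G}\sub\Aut(N)$. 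This already yields the shape of the asserted conclusion; what remains is to identify the factors $T_i$.

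Next I would exploit maximality of $\bar{Q}$. Since $N$ is not a $2$-group we cannot have $N\bar{Q}=\bar{Q}$, so $N\bar{Q}=\bar{G}$ and $\bar{G}/N\cong\bar{Q}/(\bar{Q}\cap N)$ is a $2$-group. If $\bar{G}$ had two distinct minimal normal subgroups $M$ and $M'$, then $M'$ would embed in the $2$-group $\bar{G}/M$ and hence be a $2$-group, which is impossible; thus $N$ is the unique minimal normal subgroup, the $T_i$ are pairwise isomorphic, and $\bar{G}$ permutes them transitively. Writing $\bar{Q}\cap N=D_1\times\cdots\times D_k$ with $D_i\in\Syl_2(T_i)$, note that $\bar{Q}\cap N\nor\bar{Q}$; it is not normal in $\bar{G}$ (normality would force $\bar{Q}\cap N\sub R(\bar{G})=1$, absurd), so maximality gives $\N_{\bar{G}}(\bar{Q}\cap N)=\bar{Q}$. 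Intersecting with $N$ produces $\N_{T_i}(D_i)=D_i$: each $T_i$ has a self-normalizing Sylow $2$-subgroup.

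The heart of the argument is to show that every $T_i$ is isomorphic to some $\PSL(2,q)$, and this is the step I expect to be the main obstacle. From $\N_{T_i}(D_i)=D_i$ together with Burnside's normal $p$-complement theorem, $D_i$ cannot be abelian (otherwise $T_i$ would have a normal $2$-complement, contradicting simplicity), so $D_i$ is a nonabelian $2$-group; moreover $\bar{Q}$ itself is nonabelian of order at least $16$ by Remark~\ref{R1}. I would then invoke the classification of finite nonabelian simple groups carrying a self-normalizing Sylow $2$-subgroup: building on the Gorenstein–Walter description of simple groups with dihedral Sylow $2$-subgroups \cite{GorWal} and on Baumann's theorem \cite{Baumann} for the maximality features, and appealing to the classification of finite simple groups to discard the remaining families, one is left with $T_i\cong\PSL(2,q)$ having a dihedral, self-normalizing Sylow $2$-subgroup. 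Self-normalization forces this Sylow $2$-subgroup to be a full maximal dihedral group $D_{q\pm1}$, i.e. $q\mp1$ a power of $2$, so $q=p$ is a Fermat or a Mersenne prime (the sporadic value $q=9$ appearing as $A_6\cong\PSL(2,9)$ is the only non-prime possibility). The honest difficulty here is that self-normalization of a Sylow $2$-subgroup does not single out $\PSL(2,p)$ without the classification; the earlier results cover only the $\PSL(2,q)$ and minimal-simple situations.

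Finally I would close exactly as in Lemma~\ref{psl}. With $N$ now a direct product of copies of $\PSL(2,q)$ for a Fermat or Mersenne prime $q=p$ (or $q=9$), the $2$-element $\bar{x}$ has, in each relevant factor, a projection lying in a proper soluble subgroup of even order divisible by an odd prime — an $S_4$ or $A_4$ in the $q=9$, Fermat, or $p=7$ cases, and the corresponding Borel–dihedral configuration in the other small cases — so that soluble subgroup is contained in $\Sol_{\bar{G}}(\bar{x})$ and forces an odd prime to divide $|\Sol_{\bar{G}}(\bar{x})|$, contradicting that $\Sol_{\bar{G}}(\bar{x})$ is a $2$-group. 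This eliminates $q=9$, all Fermat primes, and $p\le 7$, leaving $p\ge 31$ a Mersenne prime; since the $T_i$ are mutually isomorphic the prime $p$ is common to all factors. Hence $N$ is a direct product of copies of $\PSL(2,p)$ with $p\ge 31$ Mersenne, and $N\sub G/R(G)\sub\Aut(N)$, as required.
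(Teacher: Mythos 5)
Your skeleton matches the paper's at the top level: pass to $\bar{G}=G/R(G)$, observe $\Fit(\bar G)=1$, identify a normal subgroup $N$ that is a direct product of nonabelian simple groups with $\C_{\bar G}(N)=1$ so that $N\sub\bar G\sub\Aut(N)$, and finally eliminate Fermat primes and small Mersenne primes by producing an element of odd prime order generating a soluble subgroup together with $\bar x$. However, the step you yourself flag as the obstacle is a genuine gap, not a technicality. You reduce the identification of the simple factors $T_i$ to the statement ``$T_i$ has a self-normalizing Sylow $2$-subgroup'' and then hope that a CFSG-based classification leaves only $\PSL(2,q)$. Even granting CFSG this is false: self-normalization of a Sylow $2$-subgroup does not single out $\PSL(2,q)$ among the nonabelian simple groups. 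For instance $M_{11}$ has a semi-dihedral Sylow $2$-subgroup $P$ with $\Aut(P)$ a $2$-group and $\C_{M_{11}}(P)=Z(P)$, so $\N_{M_{11}}(P)=P$, yet $M_{11}\ncong\PSL(2,q)$. The information you discarded too early is the \emph{maximality} of the nilpotent subgroup $\bar Q$ in the whole group $\bar G$: that is exactly the hypothesis of Baumann's Main Theorem \cite{Baumann}, which, together with \cite[Theorem 2]{GorWal}, is what the paper invokes (applied to $\bar G$, not to the components, where $D_i$ need not be maximal) to conclude that $\bar G$ has a unique minimal normal subgroup equal to a direct product of copies of $\PSL(2,p)$ with $p\geq 17$ a Fermat or Mersenne prime --- with no appeal to CFSG and no detour through self-normalization. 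Your correct intermediate facts ($N\bar Q=\bar G$, $\bar G/N$ a $2$-group, uniqueness of the minimal normal subgroup) do not substitute for that citation.

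There is a second, smaller gap in your closing step. You argue with ``the projection of $\bar x$ in each relevant factor,'' which presupposes $\bar x\in N$; a priori $\bar x$ could lie outside $N$ and even permute the factors $T_i$, and Lemma~\ref{psl} applies to $N$ only once $x\in N$ modulo $R(G)$ is known. The paper supplies this via a Frattini argument: if $x\notin N$, then since $G/N$ is a $2$-group, $N$ contains every Sylow $p$-subgroup $P$ of odd order, so $G=N\N_G(P)$ and $x$ may be taken in $\N_G(P)$ modulo the soluble group $\N_N(P)$; then $\cyc{P,x}$ is soluble and $p$ divides $|\Sol_G(x)|$, a contradiction. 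You need this (or an equivalent argument) before projecting $\bar x$ onto the factors and quoting Lemma~\ref{psl} to force $p\geq 31$ Mersenne.
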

\begin{proof}
By Lemma~\ref{psl},  $Q:=\Sol_G(x)$ is Sylow $2$-subgroup of $G$, also $\Fit(G)= R(G)\leqslant Q$. 

(i) First suppose that  $R(G)=1$. 
As $\Fit(G)= R(G)=1$, by \cite[Main Theorem]{Baumann} and \cite[Theorem 2]{GorWal}, there exists a unique minimal normal subgroup $N$ which is a direct product of copies of $\PSL(2,p)$, where $p \geq 17$ is a Mersenne or Fermat prime number. Since $N\cap Q\unlhd Q$ and $Q$ is a maximal subgroup of $G$, so $\mathcal{N}_G(N\cap Q)=Q$. As $\C_G(N)\cap N=1$ and $G/N$ is a 2-group, then $\C_G(N)\sub Q$ and so $\C_G(N)\sub R(G)=1$. Therefore 
$$N\leqslant G\leqslant \Aut(N).$$

If $x\not\in N$, since $G/N$ is $2$-group, so $N$ contains  all Sylow $p$-subgroups of $G$ of odd order. Therefore by Frattini argument, $G=N\N_G(P)$, where $P$ is a Sylow $p$-subgroup of odd order. Hence $G/N\cong \N_G(P)/\N_N(P)$. Therefore $x\in \N_G(P)$ modulo $\N_N(P)$, thus $\cyc{P, x}$ is soluble modulo  soluble group $\N_N(P)$, which implies that $\cyc{P, x}$ is soluble and so $p\mid |\Sol_G(x)$, a contradiction. Thus $x\in N$. As $Sol_N(x)=N\cap\Sol_G(x)$ is $2$-subgroup, so by Lemma~\ref{psl}, $p\geq 31$ is a Mersenne prime number and $|x|\geq 8$. 
Since $R(G/R(G))=1$, the proof  is complete.
\end{proof}

\begin{theorem}\label{metacyclic}
Let $G$ be a finite insoluble group and $\Sol_G(x)$ a  maximal and meta-cyclic $2$-subgroup of $G$ for some $x\in G$.
\begin{itemize}
\item[(i)] If $G$ is simple, then $G\cong\PSL(2,p)$, where $p\geq 31$ is a Mersenne prime.
\item[(ii)] If  $G$ contains a proper non-abelian minimal normal subgroup $N$, then $\Sol_G(x)$ is  either dihedral or semi-dihedral type.
\begin{itemize}
\item[(ii-1)] If $\Sol_G(x)$ is of dihedral type, then $G\cong\PGL(2, p)$ and $N\cong\PSL(2,p)$, where $p\geq 7$ is a Mersenne prime number.
\item[(ii-2)] If $\Sol_G(x)$ is of semi-dihedral type, then $G\cong H(9)$ the non-split extension of $N\cong\PSL(2,9)$ by $C_2$ (with IdGroup:=(720, 765) of GAP library, for a description of $H(q)$ see \cite[Page 4]{Wong}).
\end{itemize}
\item[(iii)] If  any minimal normal subgroup of $G$ is abelian, then $G\cong\SL(2,p)$ or $\SL(2,p)\rtimes C_2$, where $p\geq 7$ is a Mersenne prime number.
\end{itemize}
\end{theorem}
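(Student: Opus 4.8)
The plan is to read off the isomorphism type of $Q:=\Sol_G(x)$ and then feed it into the classical classifications of finite groups by the structure of a Sylow $2$-subgroup. By Lemma~\ref{psl}, $Q$ is a Sylow $2$-subgroup of $G$, and by Remark~\ref{R1} it is non-abelian of order at least $16$; by Lemma~\ref{proper-sol}-(5) the order of $R(G)$ divides $|Q|$, so $R(G)$ is a $2$-group and in particular $O(G)=1$. A non-abelian metacyclic $2$-group is dihedral, generalized quaternion, semidihedral, or modular $M_{2^n}$; I would discard the modular type because $\Aut(M_{2^n})$ is a $2$-group (its characteristic cyclic maximal subgroup is preserved), whence $\N_G(Q)/\C_G(Q)$ is a $2$-group and Burnside's normal $p$-complement theorem makes $G$ $2$-nilpotent, hence soluble, a contradiction. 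Thus $Q$ is of maximal class: dihedral, quaternion, or semidihedral. These three types will correspond respectively to a non-abelian socle with dihedral Sylow, an abelian (central) socle, and a non-abelian socle with semidihedral Sylow, matching exactly the trichotomy (i)/(ii-1), (iii), and (ii-2).

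For the dihedral case I would invoke the Gorenstein–Walter theorem: since $O(G)=1$ and $G$ is insoluble, either $G\cong A_7$ or $\PSL(2,q)\le G\le \mathrm{P}\Gamma\mathrm{L}(2,q)$ for an odd prime power $q$ (the $2$-group alternative being soluble). Here $A_7$ is discarded immediately, its Sylow $2$-subgroup having order $8<16$ against Remark~\ref{R1}. For the remaining case I would translate $\Sol_G(x)=Q$ into arithmetic exactly as in the proof of Lemma~\ref{psl}: the $2$-element $x$ of maximal order must avoid every soluble maximal subgroup of non-$2$-power order, which forces the governing torus to be a $2$-group, i.e. $q=p$ with $p+1$ a $2$-power, so $p$ is a Mersenne prime, and forces $|x|\ge 8$. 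If $G$ is simple this gives $G\cong\PSL(2,p)$ with Sylow $2$-subgroup $D_{p+1}$, where $|x|=(p+1)/2\ge 8$ forces $p\ge 31$, which is (i); if $G$ properly contains the non-abelian minimal normal subgroup $N\cong\PSL(2,p)$ then, using $\Aut(\PSL(2,p))=\PGL(2,p)$ for prime $p$, the only possibility with dihedral Sylow is $G\cong\PGL(2,p)$, whose Sylow $2$-subgroup $D_{2(p+1)}$ already admits $x$ of order $p+1\ge 8$ when $p\ge 7$, which is (ii-1).

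For the quaternion case I would apply the Brauer–Suzuki theorem: $G$ has a central involution modulo $O(G)=1$, so $G$ is not simple, and the structure theory of groups with generalized quaternion Sylow $2$-subgroups identifies $O^{2'}(G)/O(G)$ with $\SL(2,q)$ (or the exceptional $2{\cdot}A_7$, whose order-$16$ Sylow needs the odd-element argument of the next paragraph to be eliminated). The unique minimal normal subgroup is then the central $Z(\SL(2,p))\cong C_2$, which is abelian, placing us in case (iii); the same torus/Mersenne analysis, together with Remark~\ref{R1} ruling out $\SL(2,5)$ with its order-$8$ group $Q_8$, yields $G\cong\SL(2,p)$ or $\SL(2,p)\rtimes C_2$ with $p\ge 7$ Mersenne. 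For the semidihedral case I would apply the Alperin–Brauer–Gorenstein theorem, whose conclusions are $\PSL(3,q)$ $(q\equiv -1)$, $\PSU(3,q)$ $(q\equiv 1)$, $M_{11}$, or a group with socle $\PSL(2,9)\cong A_6$; eliminating the first three (see below) leaves the socle-$A_6$ family, and among its index-$2$ extensions $\PGL(2,9)$ is excluded because its element of order $8$ lies in a soluble Borel of order $72$ and $S_6$ because its Sylow $2$-subgroup $D_8\times C_2$ is not metacyclic, so only the non-split extension $M_{10}=H(9)$ survives, which is (ii-2).

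The main obstacle is the elimination in the semidihedral case: to discard $\PSL(3,q)$, $\PSU(3,q)$ and $M_{11}$ I must show that in each of them every $2$-element of maximal order lies in a soluble subgroup of non-$2$-power order, so that its solubilizer can never be just a Sylow $2$-subgroup; this cannot be seen from the Sylow type alone and needs the parabolic/torus geometry of these families (with a short GAP verification for $M_{11}$). A second delicate point, pervading all cases, is the faithful translation of ``$\Sol_G(x)=Q$ is a $2$-group'' into the statement that the governing (nonsplit) torus is a $2$-power, equivalently that $p$ is Mersenne rather than Fermat, and the clean separation of the bounds $p\ge 31$ for the simple $\PSL$ case from $p\ge 7$ for $\PGL$ and $\SL$, which hinges on precisely when an element of the required $2$-power order can avoid the exceptional soluble maximal subgroups $A_5$ and $S_4$.
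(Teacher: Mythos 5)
Your overall strategy (classify the metacyclic Sylow $2$-subgroup $Q=\Sol_G(x)$ by type and feed each type into the corresponding Sylow-$2$-structure classification theorem) is genuinely different from the paper's, which instead exploits the \emph{maximality} of $Q$ via the Baumann/Rose/Randolph theory of insoluble groups with a nilpotent maximal subgroup and then controls case (iii) by a Schur-multiplier computation. Unfortunately your version has two concrete gaps. First, the case division rests on the false claim that a non-abelian metacyclic $2$-group is dihedral, generalized quaternion, semidihedral or modular $M_{2^n}$: that list is the classification of $2$-groups with a \emph{cyclic subgroup of index $2$} (Wong's setting), not of metacyclic $2$-groups. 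For example $\langle a,b\mid a^4=b^4=1,\ a^b=a^{-1}\rangle$ of order $16$ is non-abelian metacyclic and is none of the four types, and there are whole families of such groups in every order $\geq 16$. Your Burnside elimination, as written, only disposes of $M_{2^n}$; to rescue the trichotomy you would need the (nontrivial, uncited) theorem that every metacyclic $2$-group not of maximal class has $2$-group automorphism group (equivalently, nilpotent fusion), or else fall back on the maximality of $Q$ as the paper does.

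Second, your matching of Sylow type to socle type is wrong in a way that breaks case (iii). The group $\SL(2,p)\rtimes C_2$ of the conclusion has \emph{semidihedral} Sylow $2$-subgroups (of order $2(p+1)$), not generalized quaternion ones, so it cannot be produced by the Brauer--Suzuki branch; it has to emerge from the semidihedral branch, where the Alperin--Brauer--Gorenstein list you invoke classifies the \emph{simple} groups with quasi-dihedral Sylow $2$-subgroups and will never output a group with a central involution such as $\SL(2,p)\rtimes C_2$. More generally your proposal has no mechanism for identifying $G$ when every minimal normal subgroup is abelian: you must bound $\Fit(G)$ and pin down the central extension $1\to\Fit(G)\to G\to G/\Fit(G)\to 1$, which is exactly what the paper's metacyclicity-forces-$\Fit(G)/\Phi(\Fit(G))$-small argument plus the Schur multiplier bound $|\Fit(G)|\mid |M(G/\Fit(G))|$ accomplishes (and which is also what rules out $H(9)$ reappearing in case (iii), since $M(H(9))\cong C_3$). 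Your correct observation that the semidihedral eliminations of $\PSL(3,q)$, $\PSU(3,q)$, $M_{11}$ need extra solubilizer arguments is a third, smaller, acknowledged hole; the two issues above are the ones that make the proof fail as written.
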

\begin{proof}
(i) By Theorem~\ref{2-power}, $G\cong N\cong\PSL(2, p)$, where $p\geq 31$ is a Mersenne prime.

(ii) Assume that $O(G)$ denotes the largest normal subgroup of odd order in $G$.  Then by \cite[Corollary 1]{Randolph}, $\Fit(G)\sub\C_G(N)=O(G)=1$ and  $Q$ has the structure of dihedral or semi-dihedral type.  Now by \cite[Theorem 4]{Rose}, $G$ is isomorphic to one of the following groups:
 $$\PGL(2, p), \qquad H(9), \qquad \PGL(2, 9),$$ 
 where $p\geq 7$ is a Mersenne prime.
 By using GAP we see that for any $x\in \PGL(2,9)$, $|\Sol_{\PGL(2,9)}(x)|$ is not $2$-power and the result holds.

(iii) By parts (i) and (ii), $G/\Fit(G)$ is isomorphic to one of the following groups: \[\PSL(2,p),\qquad \PGL(2,p),\qquad H(9).\]
 Hence Sylow $2$-Subgroup of $G/\Fit(G)$ is of dihedral or semi-dihedral type.
  
 Since $Q$ is meta-cyclic, $\Fit(G)/\Phi(\Fit(G))\cong C_2$ or $C_2\times C_2$. Now for any $p>3$, the Sylow $p$-subgroup $P$ acts trivially on $\Fit(G)/\Phi(\Fit(G))$, so $[\Fit(G),P]\sub\Phi(\Fit(G))$. Hence $P\sub\C_G(\Fit(G))$, which implies that $C:=\C_G(\Fit(G))\neq \Fit(G)$. As $G/\Fit(G)$ contains unique simple  normal subgroup $N/\Fit(G)$ such that $|G: N|\leq 2$, so $N\sub\Fit(G)C$. Therefore Sylow $2$-subgroup of $C/C\cap\Fit(G)$ is of dihedral or semi-dihedral type. If  $\Fit(G)\nleqslant C$ then
$$\frac{\Fit(G)}{C\cap\Fit(G)}\times \frac{C}{C\cap\Fit(G)}\hookrightarrow\frac{G}{C\cap\Fit(G)}.$$ So Sylow $2$-subgroup of ${G}/{C\cap\Fit(G)}$ is not meta-cyclic, a contradiction. Then $\Fit(G)\sub\C_G(\Fit(G))$ and so $N\sub\C_G(\Fit(G))\sub G$. Therefore $Z(G)\sub Z(N)=\Fit(G)$.

As $G'/G'\cap\Fit(G)\cong N/\Fit(G)$, if $\Fit(G)\nleqslant G'$, then Sylow $2$-subgroup of $G/G'\cap\Fit(G)$ is not meta-cyclic. Thus $\Fit(G)\sub G'=N$.
 
 Assume that $\Phi(G)\neq\Fit(G)$, then $G=M\Fit(G)$ for some maximal subgroup $M$ of $G$. Thus $M\cap\Fit(G)\nor G$, and $M/M\cap\Fit(G)\cong G/\Fit(G)$ has a Sylow $2$-subgroup of dihedral or semi-dihedral type. Again Sylow $2$-subgroup of $G/M\cap\Fit(G)$ is not meta-cyclic, a contradiction. Then $\Fit(G)=\Phi(G)$.  Similarly $\Fit(G)=\Fit(N)=\Phi(N)$. Now the central extension 
 \[1\to \Fit(G)\to N \to \PSL(2,q)\to 1\]
is irreducible, where $q$ is prime or $q=9$. Since $N$ is perfect, by \cite[Proposition 2.1.7(i)]{Karpilovsky}, $$|\Fit(G)|\mid |M(\PSL(2, q))|=2\,\text{ or}\, 6,$$ 
where $M(\PSL(2,q))$ is the Schur multiplier of $\PSL(2,q)$.
Therefore $\Fit(G)=Z(G)$. 
Now we consider the central extension
\[1\to \Fit(G)\to G \to G/\Fit(G)\to 1.\]
Again by \cite[Proposition 2.1.7(i)]{Karpilovsky}, $|\Fit(G)|\mid |M(G/\Fit(G))|$. Therefore $G/\Fit(G)\ncong H(9)$, for $M(H(9))\cong C_3$ by  \cite[Lemma 6]{Rose}.

As $M(\PSL(2,p))\cong M(\PGL(2,p))\cong C_2$, when $p$ is odd prime, $\Fit(G)\cong M(G/\Fit(G))$. Then $G$ is a cover of $G/\Fit(G)$.  Since $\PSL(2,p)$ has a unique cover isomorphic to $\SL(2,p)$, then $N\cong\SL(2,p)$.  Therefore, either $G/\Fit(G)\cong\PSL(2,p)$ and so $G=N\cong\SL(2,p)$ or $$G/\Fit(G)\cong\PGL(2,p)\cong\PSL(2,p)\rtimes C_2,$$ and so $G\cong\SL(2,p)\rtimes C_2$ with Sylow $2$-subgroup of semi-dihedral type.
\end{proof}

\begin{remark}\label{RQ}
In Theorem~\ref{metacyclic}, we see that
Sylow $2$-subgroup of $G\cong\PGL(2,7)$ is isomorphic to $D_{16}$, and Sylow $2$-subgroup of $G\cong H(9)$ is isomorphic to $SD_{16}$. Then for some $x\in G$, $|\Sol_G(x)|=16$.

 Now for any group $A$, $\Sol_{A\times G}(x)=A\times \Sol_G(x)$, by Lemma~\ref{product}. We know that for some $x\in\PGL(2,7)$ of order $8$, $\Sol_{\PGL(2,7)}(x)\cong D_{16}$, thus $\Sol_{C_{2^m}\times\PGL(2,7)}(x)=C_{2^m}\times D_{16}$ of order $2^{4+m}$ for each $m\geq 0$.  We conclude that  for every $n\geq 4$ there exists an insoluble group $G$ such that $\Sol_G(x)$ is a $2$-subgroup of order $2^n$, for some $x\in G$.
\end{remark}

Let $G$ be an insoluble group and $x\in G\bk R(G)$. If $\Sol_G(x)$ is of prime power order or a subgroup of $G$,  then  $\Sol_{G/R(G)}(xR(G))$ is prime power order or a subgroup of $G/R(G)$, for $\Sol_{G/R(G)}(xR(G))=\Sol_G(x)/R(G)$. 

Among the  $436$ insoluble groups of order at most $2000$ (except insoluble groups of order $1920$), there are exactly fourteen groups with trivial Fitting subgroup. These groups are listed in the following table.  
\begin{table}[h]
\begin{tabular}{|c|l|}
\hline
Structure of G & IdGroup(G) in the GAP library\\ \hline
   $A_5$ &  (60, 5)\\ \hline
   $S_5$  &  (120, 34)\\ \hline
  $\PSL(3,2)$  & (168, 42)\\ \hline
   $\PSL(3,2) : C_2$ &  (336, 208)\\ \hline
   $A_6$ &  (360, 118)\\ \hline
 $\PSL(2,8)$  &  (504, 156)\\  \hline
  $\PSL(2,11)$ &  (660, 13)\\ \hline
  $S_6$  & (720, 763)\\ \hline
   $A_6 : C_2$ &(720, 764)\\ \hline
  $H(9)=A_6 \cdot C_2$  & (720, 765)\\ \hline
  $\PSL(2,13)$ &  (1092, 25)\\ \hline
  $\PSL(2,11) : C_2$ &  (1320, 133)\\ \hline
 $(A6 \cdot C_2) : C_2$  & (1440, 5841)\\ \hline
 $\PSL(2,8) : C_3$  &  (1512, 779)\\ \hline
\end{tabular}
 \\[0.2cm]
\end{table}

By using GAP, we checked the correctness of the following conjectures for the groups listed in Table 1. Since $\Fit(G)\sub R(G)$, the conjectures  1 and 2 are true for all insoluble groups of order at most $2000$ (except insoluble groups of order $1920$).

As $\frac{\N_G(\cyc{x})\Fit(G)}{\Fit(G)}\sub\N_{G/\Fit(G)}(\cyc{x\Fit(G)})$ and equality occurs if and only if $\Fit(G)\sub\N_G(\cyc{x}))$. So the conjecture~3 is true too for all insoluble groups $G$ of order at most $2000$ (except insoluble groups of order $1920$), when $\Fit(G)\sub\N_G(\cyc{x})$.
For the condition $\Fit(G)\nsub\N_G(\cyc{x})$ we checked the correctness of the conjecture~3 for  all insoluble groups of order less than $2000$ (except insoluble groups of order $1920$), by using GAP-code 2.

\begin{conjecture}
 Let $G$ be a finite insoluble group. If for some $x\in G$, $|\Sol_G(x)|=2^n$ then $\Sol_G(x)$ is a subgroup of $G$. 
\end{conjecture}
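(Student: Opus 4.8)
The plan is to show that the hypothesis forces $\Sol_G(x)$ to coincide with a single Sylow $2$-subgroup of $G$, which is automatically a subgroup. First I would reduce to $\Fit(G)=R(G)=1$. As $|R(G)|$ divides $|\Sol_G(x)|=2^{n}$ by Lemma~\ref{proper-sol}-(5), $R(G)$ is a $2$-group, and Lemma~\ref{proper-sol}-(4) gives $\Sol_{G/R(G)}(xR(G))=\Sol_G(x)/R(G)$; hence its order is again a $2$-power and $\Sol_G(x)$ is a subgroup of $G$ exactly when its image is a subgroup of $G/R(G)$. Since $R(G/R(G))=1$, we may assume $R(G)=1$. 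Then $x$ is a $2$-element (its order divides $2^{n}$ by Lemma~\ref{proper-sol}-(2)); fix $Q\in\Syl_2(G)$ with $x\in Q$, so that $Q\subseteq\Sol_G(x)$ and, as $Q$ is soluble and contains $x$, also $\N_G(Q)\subseteq\Sol_G(x)$.

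The engine of the proof is an elementary counting lemma: a union of two or more distinct finite subgroups, each of $2$-power order, is never of $2$-power cardinality. Indeed, for $H_1\neq H_2$ of orders $2^{a},2^{b}$ with $|H_1\cap H_2|=2^{c}$ we have $|H_1\cup H_2|=2^{c}\bigl(2^{a-c}+2^{b-c}-1\bigr)$, whose second factor is odd and exceeds $1$ unless $H_1=H_2$, and an inclusion--exclusion refinement treats three or more subgroups. Granting this, it suffices to prove the single \textbf{Claim:} every maximal soluble subgroup of $G$ containing $x$ is a $2$-group. Assuming the Claim, each such subgroup is a Sylow $2$-subgroup of $G$, so by Lemma~\ref{proper-sol}-(1) the set $\Sol_G(x)$ is the union of the Sylow $2$-subgroups containing $x$; the counting lemma and the $2$-power hypothesis then force this union to be a single subgroup, namely $Q$, and $\Sol_G(x)=Q$ is a subgroup.

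To attack the Claim I would feed the hypothesis into the structural results already proved. Treating $\Sol_G(x)$ as a $2$-subgroup lets one invoke Lemma~\ref{psl}, Corollary~\ref{2-power_1} and Theorem~\ref{2-power} (and, behind them, the Gorenstein--Walter and Baumann classifications): with $\Fit(G)=1$ these pin $G$ down to an almost simple group whose socle is a direct product of copies of $\PSL(2,p)$ with $p\geq 31$ a Mersenne prime and dihedral Sylow $2$-subgroups. In such $G$ one verifies the Claim through Dickson's classification of the subgroups of $\PSL(2,p)$: a Borel subgroup has order $p(p-1)/2$ with $(p-1)/2$ odd (since $p$ is Mersenne), the dihedral subgroups $D_{p-1}$ have odd cyclic part, and $A_4,S_4,A_5$ have $2$-exponent at most $4$; hence an element of $2$-power order at least $8$ lies in no soluble subgroup other than a $2$-group. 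Corollary~\ref{C.1} and Lemma~\ref{L.1} are then used to locate $x$ at the top of the $2$-structure (so that $\N_G(\cyc{x})=Q$), which is consistent with the Claim.

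The step I expect to be the genuine obstacle is \emph{bridging from cardinality to group structure}. The structural theorems (Lemma~\ref{psl}, Theorem~\ref{2-power}) assume that $\Sol_G(x)$ already \emph{is} a $2$-subgroup, whereas the hypothesis only gives that the \emph{subset} $\Sol_G(x)$ has $2$-power cardinality, and cardinality is blind to Lagrange. To break this circularity I would argue by strong induction on $|G|$: with the reduction to $\Fit(G)=1$ in force, the counting lemma shows that a single odd-order element of $\Sol_G(x)$, or a second Sylow $2$-subgroup meeting $\cyc{x}$, already introduces an odd prime divisor of $|\Sol_G(x)|$, contrary to hypothesis. The hardest configurations are the almost simple case with socle a single $\PSL(2,p)$, where one must exclude that an element $x$ of non-maximal $2$-power order still yields a $2$-power solubilizer, and the case of several $\PSL(2,p)$ factors interacting through Lemma~\ref{product}. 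Establishing the Claim in these cases without presupposing that $\Sol_G(x)$ is a group is where the real difficulty concentrates, and it is presumably why the statement is recorded here only as a conjecture.
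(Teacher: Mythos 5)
First, note that the paper does not prove this statement at all: it is recorded only as a conjecture, verified computationally for insoluble groups of order at most $2000$ (excluding order $1920$). So there is no proof in the paper to compare against, and the question is only whether your sketch closes the gap. It does not, for two concrete reasons.

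The engine of your argument, the ``counting lemma,'' is false for three or more subgroups. Take the three Sylow $2$-subgroups of $S_4$: they are dihedral of order $8$, pairwise intersecting in the Klein four-group, and their union is the set of all $2$-elements of $S_4$, which has exactly $1+6+3+6=16=2^4$ elements and is visibly not a subgroup (the product of two transpositions can be a $3$-cycle). More generally, inclusion--exclusion over $k\geq 3$ subgroups of $2$-power order can perfectly well return a $2$-power; your computation only works for $k=2$. Since in the intended application $\Sol_G(x)$ would be a union of possibly many Sylow $2$-subgroups containing $x$, the step ``the $2$-power hypothesis forces the union to be a single $Q$'' simply does not follow, and this is the step that was supposed to convert a cardinality statement into a group-theoretic one.

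Second, even granting the Claim that every maximal soluble subgroup containing $x$ is a $2$-group, your route to it is circular in a way you acknowledge but do not repair: Lemma~\ref{psl} assumes $\Sol_G(x)$ \emph{is} a $2$-subgroup (hence already a subgroup), Corollary~\ref{2-power_1} assumes $G$ is minimal insoluble, and Theorem~\ref{2-power} assumes $\Sol_G(x)$ is a \emph{maximal} subgroup of $G$ --- that maximality is what licenses the appeal to Baumann and Gorenstein--Walter (a nilpotent maximal subgroup), and it is not available from the hypothesis $|\Sol_G(x)|=2^n$ alone. So the claimed reduction to almost simple groups with socle a product of $\PSL(2,p)$'s, $p$ a Mersenne prime, is not justified in the generality needed. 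What you have is a plausible programme, with the genuinely hard point correctly identified, but both load-bearing steps (the counting lemma and the structural reduction) fail as stated; the statement remains a conjecture.
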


\begin{conjecture}
Let $G$ be a finite insoluble group. Then for any $x\in G$, $|\Sol_G(x)|\neq p^n$, where $p$ is an odd prime and $n$ is natural number.
\end{conjecture}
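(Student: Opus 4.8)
The plan is to argue by minimal counterexample and to reduce, via the structure of $F^*(G)$ and the classification of finite simple groups, to an explicit analysis of almost simple groups; the genuine difficulty is concentrated in the case where $\Sol_G(x)$ fails to be a subgroup. First I would strip off the soluble radical. Let $G$ be a counterexample of least order, so that $|\Sol_G(x)|=p^n$ with $p$ odd. By Lemma~\ref{proper-sol}-(2),(10) the element $x$ is a $p$-element and $\C_G(x)$ is a $p$-group, and by part~(5) the radical $R(G)$ is a $p$-group. If $x\in R(G)$ then $\Sol_G(x)=G$, so $G$ is an odd $p$-group and hence soluble, a contradiction; thus $x\notin R(G)$. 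By parts~(4),(5), $\Sol_{G/R(G)}(xR(G))=\Sol_G(x)/R(G)$ has order $|\Sol_G(x)|/|R(G)|$, again an odd prime power, while $G/R(G)$ is insoluble. If $R(G)\neq 1$ this is a smaller counterexample, so minimality forces $R(G)=1$.

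Next I would settle the case in which $\Sol_G(x)$ is a subgroup. Then it is an odd $p$-group, in particular nilpotent with trivial Sylow $2$-subgroup. Picking $M\sub G$ in which $\Sol_G(x)$ is maximal, Theorem~\ref{Deskins} forces $M$ to be soluble; but $\Sol_G(x)$ is a maximal soluble subgroup of $G$ containing $x$ by Lemma~\ref{proper-sol}-(1), so $M=\Sol_G(x)$, a contradiction. Since $x$ is a $p$-element, every $y\notin\N_G(\cyc{x})$ satisfies $[\cyc{x}:\cyc{x}\cap\cyc{x^y}]\geq p$, so the quantity $\ell$ of Lemma~\ref{L.1} is at least $p$; in particular when $|x|=p^{n-1}$ that lemma gives either $\Sol_G(x)=\N_G(\cyc{x})$ (a subgroup, excluded above) or $|\Sol_G(x)|>\ell|x|\geq p^n$. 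As $\cyc{x}$ is properly contained in $\Sol_G(x)$ by Lemma~\ref{proper-sol}-(6), we also have $|x|\leq p^{n-1}$, so only the case $|x|\leq p^{n-2}$ with $\Sol_G(x)$ not a subgroup survives.

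For this remaining case I would use that $R(G)=1$ gives $\Fit(G)=1$, whence $F^*(G)=\mathrm{Soc}(G)=S_1\times\cdots\times S_k$ is a direct product of nonabelian simple groups with $\C_G(\mathrm{Soc}(G))=1$, so that $\mathrm{Soc}(G)\sub G\sub\Aut(\mathrm{Soc}(G))$. One then runs through the possible socles using the classification, and in each family locates the soluble overgroups of the odd $p$-element $x$ — the normalizers $\N_G(\cyc{x})$, together with Borel-type and dihedral or $S_4$-type subgroups, exactly as in the proof of Lemma~\ref{psl} — in order to evaluate $|\Sol_G(x)|=\bigl|\bigcup_H H\bigr|$ and to show that it is never an odd prime power.

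This final step is the main obstacle. Unlike a subgroup, the \emph{set} $\Sol_G(x)$ does not inherit the prime divisors of the soluble subgroups it contains: the example $\Sol_{\PSL(2,7)}(x)\cong C_7\rtimes C_3$ of order $21$ shows that odd, non-prime-power sizes genuinely occur, so no purely arithmetic shortcut is available. One must instead count the elements of the union $\bigcup_H H$ through the pairwise intersections $H\cap H'$ and prove that, after inclusion--exclusion, the total can never collapse onto a single prime power. Controlling these overlaps uniformly across the simple group families — rather than group-by-group by machine, as is done for small orders in the present paper — is the crux, and is presumably the reason the statement is recorded here only as a conjecture.
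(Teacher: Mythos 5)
This statement appears in the paper only as a conjecture: the authors give no proof at all, and merely report a GAP verification for insoluble groups of order at most $2000$ (excluding order $1920$), using the fact that $\Sol_{G/R(G)}(xR(G))=\Sol_G(x)/R(G)$ to reduce to the fourteen trivial-Fitting groups in their table. So there is no argument in the paper to match yours against, and your own closing paragraph concedes the decisive step is missing. Your preliminary reductions are nevertheless correct and go beyond what the paper records: Lemma~\ref{proper-sol}-(2),(5),(10) does make $x$ a $p$-element with $\C_G(x)$ and $R(G)$ both $p$-groups; minimality together with parts (4),(5) does force $R(G)=1$; Remark~\ref{R1} via Theorem~\ref{Deskins} does exclude the case where $\Sol_G(x)$ is a subgroup, since an odd-order $p$-group is nilpotent with trivial Sylow $2$-subgroup; and Lemma~\ref{L.1} with $\ell\geq p$ (valid because $\cyc{x}\cap\cyc{x^y}$ is a proper subgroup of the cyclic $p$-group $\cyc{x}$ whenever $y\notin\N_G(\cyc{x})$) does force $|x|\leq p^{n-2}$.

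However, the proposal is not a proof. The entire difficulty --- a trivial-Fitting group $G$ with $\mathrm{Soc}(G)\sub G\sub\Aut(\mathrm{Soc}(G))$, $x$ a $p$-element of order at most $p^{n-2}$, and $\Sol_G(x)$ a non-subgroup union of soluble overgroups of cardinality exactly $p^n$ --- is left untouched. You correctly observe that the set $\bigcup_H H$ inherits no arithmetic constraint from its constituent subgroups (the order-$21$ value of $|\Sol_{\PSL(2,7)}(x)|$ shows odd, non-prime-power sizes genuinely occur), but you supply no mechanism for evaluating or bounding $\bigl|\bigcup_H H\bigr|$ across the simple group families, no analogue of the divisibility contradictions ($3$ or $p$ dividing $|\Sol_{\bar G}(\bar x)|$) that drive the proof of Lemma~\ref{psl} in the $2$-group case, and no reduction of the socle to a single simple factor. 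As written, the argument establishes some necessary conditions on a minimal counterexample and then stops exactly where the conjecture begins; it cannot be credited as a proof of the statement.
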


\begin{conjecture}
Let $G$ be a finite insoluble group. Then for any $x\in G$,
$|\N_G(\langle x \rangle)|\mid|\Sol_G(x)|$.
\end{conjecture}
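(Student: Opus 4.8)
The plan is to induct on $|G|$, peeling off the soluble radical to reduce to the case $R(G)=1$, and then to attack that case with the structure of (almost) simple groups. First I would clear the degenerate situations: if $x\in R(G)$ then $\Sol_G(x)=G$ and the claim is Lagrange's theorem, and if the first alternative of Lemma~\ref{L.1} holds, namely $\N_G(\cyc x)=\Sol_G(x)$, there is nothing to prove. So assume $x\notin R(G)$ and $\N_G(\cyc x)\neq\Sol_G(x)$. Writing $R=R(G)$, $\bar G=G/R$ and $\bar x=xR$, the quotient $\bar G$ is insoluble with $R(\bar G)=1$ and $|\bar G|<|G|$. By Lemma~\ref{proper-sol}-(4),(5) we have $|\Sol_G(x)|=|R|\cdot|\Sol_{\bar G}(\bar x)|$, while the second isomorphism theorem gives $|\N_G(\cyc x)|=|\N_G(\cyc x)\cap R|\cdot|\N_G(\cyc x)R/R|$, where $|\N_G(\cyc x)\cap R|$ divides $|R|$ and $\N_G(\cyc x)R/R\sub\N_{\bar G}(\cyc{\bar x})$, so that $|\N_G(\cyc x)R/R|$ divides $|\N_{\bar G}(\cyc{\bar x})|$. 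Assuming the statement for $\bar G$ by induction, $|\N_{\bar G}(\cyc{\bar x})|$ divides $|\Sol_{\bar G}(\bar x)|$; multiplying the two divisibilities yields $|\N_G(\cyc x)|\mid |R|\cdot|\Sol_{\bar G}(\bar x)|=|\Sol_G(x)|$. Hence it suffices to settle the case $R(G)=1$.

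Next I would record a reformulation that isolates exactly what remains. For every $x$ one has the chain $\cyc x\sub\C_G(x)\sub\N_G(\cyc x)$ with $\N_G(\cyc x)/\C_G(x)\hookrightarrow\Aut(\cyc x)$ abelian; set $t=|\N_G(\cyc x):\C_G(x)|$, a divisor of $\varphi(|x|)$. Moreover $\N_G(\cyc x)$ stabilizes $\Sol_G(x)$ under conjugation: if $\cyc{x,\omega}$ is soluble and $n\in\N_G(\cyc x)$ then $\cyc{x,\omega^n}\sub\cyc{x,\omega}^n$ is soluble because $\cyc{x^n}=\cyc x$, so $\omega^n\in\Sol_G(x)$. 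Combining this with $|\C_G(x)|\mid|\Sol_G(x)|$ from Lemma~\ref{proper-sol}-(10), and writing $|\Sol_G(x)|=|\C_G(x)|\,m$, the conjecture becomes equivalent to the single divisibility $t\mid m$.

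Finally, the core case $R(G)=1$ forces $\Fit(G)=1$, so $F^*(G)=\operatorname{Soc}(G)$ is a direct product of nonabelian simple groups and $G\hookrightarrow\Aut(\operatorname{Soc}(G))$; here I would try to produce the factor $t$ by exploiting the conjugation action of $\N_G(\cyc x)$ on $\Sol_G(x)$ together with $|\Sol_G(x)|=|\C_G(x)|\,m$, aiming to force $t\mid m$. The hard part is that no soft coset argument is available: whereas $\Sol_G(x)$ is a union of cosets of $\cyc x$ (which gives $|x|\mid|\Sol_G(x)|$ directly), it genuinely fails to be a union of cosets of the larger group $\N_G(\cyc x)$. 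Already in $G=A_5$ with $x=(1\,2\,3)$ one has $(1\,2\,4)\in\Sol_G(x)$ and $(1\,2)(4\,5)\in\N_G(\cyc x)$ while $(1\,2\,4)\cdot(1\,2)(4\,5)=(1\,4\,5)\notin\Sol_G(x)$, yet $|\N_G(\cyc x)|=6$ does divide $|\Sol_G(x)|=24$. Thus the divisibility is not coset-theoretic and must instead be read off from the actual maximal soluble subgroups containing $x$ — their tori, Borel and local overgroups — exactly the sort of case analysis performed for $\PSL(2,p)$ in Lemma~\ref{psl} and Theorem~\ref{metacyclic}. Carrying this out uniformly over all almost simple groups, with the added bookkeeping of how $x$ meets and permutes the simple factors of the socle, is the main obstacle, and appears to require the classification of finite simple groups; this is presumably why the statement is verified here only computationally.
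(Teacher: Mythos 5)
This statement is Conjecture~3 of the paper: the authors do not prove it, they only verify it by GAP for the insoluble groups of order at most $2000$ (excluding order $1920$), using the observation that $\N_G(\cyc{x})\Fit(G)/\Fit(G)\sub\N_{G/\Fit(G)}(\cyc{x\Fit(G)})$ to reduce the computation to the fourteen groups in their table. So there is no proof in the paper to compare yours against, and your proposal must be judged on its own terms. Judged that way, it has a genuine and decisive gap: the entire case $R(G)=1$ is left open. Your reduction to that case is correct --- $|\Sol_G(x)|=|R(G)|\cdot|\Sol_{G/R(G)}(xR(G))|$ by Lemma~\ref{proper-sol}(4),(5), $|\N_G(\cyc{x})|=|\N_G(\cyc{x})\cap R(G)|\cdot|\N_G(\cyc{x})R(G)/R(G)|$ by the second isomorphism theorem, and multiplying the two divisibilities is legitimate --- and it is in fact slightly sharper than the paper's own remark, which only obtains a clean reduction when $\Fit(G)\sub\N_G(\cyc{x})$. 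Your observation that $\N_G(\cyc{x})$ acts on $\Sol_G(x)$ by conjugation (equivalently, Lemma~\ref{proper-sol}(8),(9)) and your $A_5$ example showing that $\Sol_G(x)$ is not a union of cosets of $\N_G(\cyc{x})$ are also correct and correctly diagnose why no soft counting argument can work. But none of this produces the divisibility $t\mid m$ in the case $\Fit(G)=R(G)=1$; you explicitly concede that the needed case analysis over almost simple groups (and over how $x$ permutes the factors of the socle) is not carried out. That case is the whole content of the conjecture, so the proposal is a reduction plus a research plan, not a proof.

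One further caution about the plan itself: even granting CFSG, the step ``$G\hookrightarrow\Aut(\operatorname{Soc}(G))$ and now read off the maximal soluble overgroups of $x$'' is substantially harder than the special cases treated in Lemma~\ref{psl} and Theorem~\ref{metacyclic}, because there $\Sol_G(x)$ was assumed to be a $2$-group (hence a single Sylow $2$-subgroup), whereas in general $\Sol_G(x)$ is a union of many maximal soluble subgroups of different orders and the divisibility must emerge from cancellation in an inclusion--exclusion over that union. Nothing in the proposal indicates how to control that, and the authors' own decision to leave the statement as a conjecture suggests they could not either.
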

If the conjecture 3 is true, then the assumption $|x|=q$ in Remark~\ref{pq}, can be removed.

\end{document}